\documentclass[11 pt, reqno]{amsart}
\usepackage{amsfonts}
\usepackage[left= 24mm, right= 24mm]{geometry}
\usepackage{amssymb, amsmath}
\usepackage{lscape}
\usepackage{dsfont}
\usepackage{mathrsfs}
\usepackage{mathtools}
\usepackage{bbm}
\usepackage{enumitem}

\usepackage{cancel}
\usepackage{centernot}
\usepackage{verbatim}
\usepackage{appendix}

\usepackage[backend=biber]{biblatex}
\DeclareMathAlphabet{\mathpzc}{OT1}{pzc}{m}{it}
\newtheorem{te}{Theorem}[section]
\newtheorem{defin}[te]{Definition}
\newtheorem{os}[te]{Remark}
\newtheorem{prop}[te]{Proposition}

\newtheorem{ex}[te]{Example}
\newtheorem{cor}[te]{Corollary}

\usepackage[pdftex,plainpages=false,colorlinks,hyperindex,bookmarksopen,linkcolor=red,citecolor=red,urlcolor=red]{hyperref}

\numberwithin{equation}{section}

\allowdisplaybreaks
\def \l { \left( }
\def \r {\right) }

\def \lq {\left[ }
\def \rq {\right] }

\def \lg {\left\{ }
\def \rg {\right\} }

\def \E {\mathbb{E}}

\def \N {\mathbb{N}}
\def \R {\mathbb{R}}

\def \vp {\varphi}
\def \ve {\varepsilon}
\def \si { \sigma }
\def \al { \alpha }

\usepackage{bm}

\newtheorem*{assumption*}{\assumptionnumber}
\providecommand{\assumptionnumber}{}
\makeatletter

\newcounter{mylabelcounter}
\newcommand{\labelText}[2]{%
#1\refstepcounter{mylabelcounter}%
\immediate\write\@auxout{%
  \string\newlabel{#2}{{1}{\thepage}{{\unexpanded{#1}}}{mylabelcounter.\number\value{mylabelcounter}}{}}%
}%
}

\makeatother

\def \bfn {\eta}
\def \L {\mathcal{L}}

\def \Inv { \psi }
\def \Int { \mathcal{I} }

\begin{document}

\large

%\title[]{A class of non-Markovian  processes with long-range dependence}

%\title[]{On some Non-Markovian processes with long-range dependence}

%\title{Non-Markovian chains with long-range dependence and scaling limits of Continuous Time Random Walks}

\title{Non-Markovian chains with long-range dependence and their scaling limits}

\author[]{Lorenzo Facciaroni}
 \author[]{Costantino Ricciuti}
\author[] {Enrico Scalas}

\keywords{Non-Markovian dynamics, continuous-time random walks, non-local operators, power law tails, anomalous diffusion, stable processes}
	\date{\today}
    \subjclass[2020]{60G99, 60G09, 60F17, 60G52}

\begin{abstract}

There is a well-established theory linking certain semi-Markov chains and continuous-time random walks to time-fractional equations and anomalous diffusion. In this work, we go beyond the semi-Markov framework by considering some  non-Markovian chains,  which exhibit long-memory behaviour, due to stochastic dependence among their  waiting times. 
 Particular attention is devoted to the so-called para-Markov chains. Their waiting times share the same marginal distributions as those of the above mentioned semi-Markov chains, but they are dependent; their joint distribution is of Schur-constant type and is closely related to complete Bernstein functions and De Finetti's theorems.  A second model that we focus on is given by time-changed Markov chains, where the random time is the inverse of an increasing stable process. This generalizes well-known semi-Markov models available in the literature, which typically focus solely on the inverse of the Lévy stable subordinator. The above mentioned models are unified by a general theory of time change of Markov chains.

\end{abstract}

\maketitle
\tableofcontents

\section{Introduction and preliminaries} \label{Introduction}

Throughout the paper, we use $\N$ and $\R$ to denote the sets of natural and real numbers, respectively; $\N_+$ is for positive integers and $\R_+$ for non-negative reals.

\subsection{State of art and vision}

Non-Markovian processes are often difficult to analyse.
\begin{comment}Except for a few notable classes, such as Gaussian or stable processes, it is uncommon to encounter non-Markovian models with tractable finite-dimensional distributions.
\end{comment}
A common strategy  to circumvent difficulties is to construct them via time-changes of Markovian processes. As is done in several models known in literature (see the discussion below), one may consider a time-homogeneous Markov process $M = \{M(t),\ t\in\R_+\}$, and a right-continuous, a.s. increasing process $\sigma = \{\sigma(t),\, t \in\R_+\}$, independent of $M$, with no deterministic points of discontinuity; then, letting $\Inv = \{\Inv(t),\, t \in\R_+\}$ denote the inverse of $\si$, i.e.
    \begin{align*}
        \Inv(t) := \inf\lg s\in\R_+ \mid \sigma(s) > t\rg, \qquad t\in\R_+,
    \end{align*}
one can define the (possibly) non-Markovian process $Y = \{Y(t),\ t\in\R_+\}$ as
\begin{align}
Y(t):= M(\Inv(t)) \qquad t\in\R_+. \label{prima formula}
\end{align}
We are particularly interested in two cases. The first one is when $M$ is a Brownian motion and thus $Y$ is an anomalous diffusion process; the second one is when $M$ is a homogeneneous continuous-time Markov chain. In the latter case, starting from  $\Tilde{M} = \lg \Tilde{M}(n),\ n\in\mathbb{N} \rg$,  a discrete-time Markov chain, and $N = \lg N(t),\ t\in\R_+ \rg$, a Poisson process of intensity $\lambda>0$, we  then define $M$ by
    \begin{align}
        M(t) := \tilde{M}(N(t)), \qquad \forall t\in\R_+. \label{continuous-time chain M(t)}
    \end{align}
The waiting times $\{W_k,\ k\in \N_+\}$ of $M$ are i.i.d. exponential random variables of parameter $\lambda$. Then one can prove that the waiting times 
 $\{J_k,\ k\in \N_+\}$ of $Y$ are given by
 
\begin{comment}By the definition of the time-changed process, in this case one has
    \begin{align*}
        Y(t) = \tilde{M}\l{N(\Inv(t))}\r \qquad T_n^- \leq \Inv(t) < T_{n+1},
    \end{align*}
    where $T_n := \sum_{k = 1}^nW_k,\ \ T_0 := 0$, and thus the waiting times $\{J_k,\ k\in\N_+\}$ of $Y$ are given by
\end{comment}

    \begin{align}
        J_{n+1} = \sigma(T_{n+1}) - \sigma(T_n^-), \qquad \forall n\in\mathbb{N}. \label{Preliminaries: generica forma del n+1 waiting time dopo il time change}
    \end{align}
    where $T_n:= \sum _{k=1}^n W_k,\ T_0 := 0$ and meaning with the superscript on $T_n$ the left limit.
    By Equation (\ref{Preliminaries: generica forma del n+1 waiting time dopo il time change}) it is worth noting that the joint law of the waiting times only depends on the finite-dimensional distributions of the chosen $\si$, whence different scenarios may arise.
    
To outline the situation in a concise, yet effective, manner, we may say that three scenarios are of particular interest:
    \begin{enumerate} \label{Elenco puntato sui casi di sigma}
        \item If $\sigma$ is a Lévy subordinator, then $Y$ is a non-Markovian process with short range dependence, namely a time-homogeneous semi-Markov process (in the sense of \cite{gikhman2004theory}). This case has been widely studied in the literature (see, for instance, the references below) and we shall recall the major facts in the discussion that follows.
        \item If $\si$ is an additive subordinator, i.e.~it has independent but not (necessarily) stationary increments, the resulting process $Y$ is a time-inhomogeneous semi-Markov process. As we shall see below, this case has already been studied in a few papers.
        \item The case of $\si$ with dependent increments seems to be neglected in the literature. In this case $Y$ displays long-range dependence and goes beyond semi-Markov models. In order to restrict our attention to models that  are both statistically relevant and analytically tractable,  we shall mainly consider the case where $\sigma$ has dependent but stationary increments. 
    \end{enumerate}

\begin{comment}
We are specifically interested in the case  As a general fact, if $\si$ has stationary increments, by a conditioning argument on the values of $\Inv(t)$, for each marginal waiting time $J_k,\ k\in\N$, the following representation holds
    \begin{align}
        P(J_k > t) = P(N(\Inv(t)) = 0) = \mathbb{E} e^{-\lambda \Inv(t)}. \label{Preliminaries: rappresentazione sopravvivenza waiting time marginale}
    \end{align}
\end{comment}

Concerning case (1), significant results are available in both Analysis and Statistical Physics (consult, e.g.~\cite{baeumer2017fokker,hernandez2017, Meerschaert2014,meerschaert2008triangular, meerschaert2019relaxation} and all references therein).  
Some facts are relevant to our purposes.

From an analytical perspective, the semi-Markov process $Y$ provides a stochastic solution to suitable non-local, integro-differential equations, possibly exhibiting fractional operators. In particular, let $\sigma$ be a subordinator with Lévy measure $\mu$ and Lévy tail $\overline{\mu}$. Moreover, let $G$ be the generator of the Markov process $M$, hence $M$ is governed by a differential equation $\partial/ \partial _t\, p(x,t)= Gp(x,t)$. Then $M(\Inv(t))$ is governed by an integro-differential equation of the form 
\begin{align}\label{eq 1}\mathcal{D}_t ^{\mu}p(x,t)=Gp(x,t)
\end{align}
where, for a sufficiently regular $g(\cdot)$, $\mathcal{D}_t ^{\mu}$ is the generalized Caputo fractional derivative, defined by
$$\mathcal{D}^\mu_t g(t):= \int _0^\infty (g(t)-g(t-\tau)) \mu (d   \tau) - \overline{\mu}(t) g(0).$$
An important case is the one where $\si$ is an $\al$-stable subordinator. Here the Lévy measure of $\si$ has a  density $\mu(t) = \frac{\al t^{-\al - 1}}{\Gamma (1 - \al)}$ and the operator $\mathcal{D}_t^{\mu}$ becomes the Caputo fractional derivative $\l \partial / \partial t \r^{\al}$ (see e.g. \cite{mainardigorenflo}). 

In the case of $M$ continuous-time, homogeneous Markov chain, as in Equation \eqref{continuous-time chain M(t)}, the waiting times (\ref{Preliminaries: generica forma del n+1 waiting time dopo il time change}) are i.i.d., but not (necessarily) exponential anymore, i.e.~$Y$ is a homogeneous semi-Markov chain. Each waiting time has survival function  
\begin{align}P(J_k>t)= S_\lambda (t) \label{funz sopravv}
\end{align}
where $S_\lambda(\cdot)$ is an eigenfunction of $\mathcal{D}_t^{\mu}$ with eigenvalue $-\lambda$.
For example, in the  case where $\si$ is a stable subordinator, the waiting times have power-law decaying distribution, known as  Mittag-Leffler distribution, i.e.~\begin{align}\label{mittag leffler distribution}S_\lambda (t)= E_\alpha (-\lambda t^\alpha),\end{align} where $E_\alpha(\cdot)$ is the Mittag-Leffler function (see \cite{haubold2011mittag}). In this case, for large $t$ we have $S_\lambda (t) \sim C\, t^{-\alpha}$, with $C>0$.

Finally, if $M$ is a Brownian motion, then one gets  models of  semi-Markov anomalous (sub-) diffusion, which are relevant  from the physical point of view. These processes appear as scaling limits of continuous-time random walks (CTRWs) with i.i.d. waiting times between successive jumps (see \cite{straka2011lagging,Meerschaert2014}). In this setting,  the density $p(x,t)$ of the time changed process $Y$ is the unique solution of the  non-local equation in time
\begin{align*}
    \mathcal{D}^\mu_t p(x,t) = \frac{1}{2}\Delta p(x,t), 
\end{align*}
where $\Delta$ the Laplacian operator.
%and $\delta(x)$ indicates the Dirac delta in $x$.
As before, the case of $\si$ $\al$-stable subordinator leads to the time fractional heat equation
\begin{align} \label{kkk}
    \l \frac{\partial}{\partial t}\r^{\alpha}  p(x,t) = \frac{1}{2}\Delta p(x,t).
\end{align}

Case (2) concerns the situation in which $\sigma$ is an additive process (see \cite{ken1999levy} for details). In this setting, $\sigma$ has independent but non-stationary increments, that is, it is characterized by a time-dependent Lévy measure $\mu_t(\cdot)$. For example, if $\sigma$ is a  multistable subordinator, then the  Lévy measure is $\mu _t(dx)=  \frac{\alpha (t)x^{-\alpha (t)-1}}{\Gamma (1-\alpha (t))}dx$, for an assigned function $t\longmapsto \alpha (t) $ with values in $(0,1)$. Some results are available regarding the time-changed process $Y$ which we summarize as follows. If $M$ is a continuous-time Markov chain, the waiting times in \eqref{Preliminaries: generica forma del n+1 waiting time dopo il time change} are independent but non-identically distributed (for their marginal distributions see \cite{beghin2019time}).
 Moreover, there is an interesting link with the theory of  scaling limit of CTRWs, where the waiting times between successive jumps are independent but not identically distributed (see \cite{molchanov2015multifractional}).

To the best of our knowledge,  there are not many contributions in literature concerning case (3). This appears to be challenging, as the time-change $M(\psi (t))$ yields processes that are fully non-Markovian, exhibiting genuine long-range dependence. As mentioned before, two general models are of interest.
  When $M$ is a continuous-time Markov chain, then the waiting times (\ref{Preliminaries: generica forma del n+1 waiting time dopo il time change}) are dependent and their joint law is related to the finite-dimensional distributions of $\sigma$.
Second, if $M$ is a Brownian motion, then $Y$ is a anomalous diffusion with long-range dependence, which should be the scaling limit of CTRWs with dependent waiting times.
%\end{itemize}

\subsection{Models and main results}

Our results concern non Markovian chains with dependent waiting times and anomalous diffusion with long range dependence.
Section \ref{Section: para makov chains} deals with the so-called para-Markov chains and para-Markov anomalous diffusion. Such processes were already  defined in \cite{facciaroni2025markov} and \cite{facciaroni2025random} in  particular cases;  a much more general definition is given in this paper and then this definition is embedded in the subordination scheme of Formula (\ref{prima formula}).

A para-Markov chain is such that its waiting times  have the same marginal survival function (\ref{funz sopravv}) of the above-mentioned semi-Markov chains, but here they are stochastically dependent. Their joint distribution is of Schur constant type (see e.g.~\cite{barlow1992finetti,caramellino1994dependence,caramellino1996wbf})  which makes them statistically and analytically tractable, i.e.
$$P(J_1>t_1, ..., J_n>t_n)= S_\lambda(t_1+...+t_n),$$
where $S_\lambda(\cdot)$ is a completely monotone function, related to complete Bernstein functions (for the theory of Bernstein functions see \cite{schilling2012bernstein}).
A very special case of these chains, with power-law, Mittag-Leffler distributed waiting times (as in Formula (\ref{mittag leffler distribution})), was already defined in \cite{facciaroni2025markov} and   non-local equations of the form $\partial^\alpha _t p(x,t)= -(-G)^\alpha p(x,t)$ arise. In the Markov case we get $S_\lambda (t)= e^{-\lambda t}$ and the waiting times are independent.

A para-Markov anomalous diffusion $\{Z_t,\ t\in\R_+\}$, with values in $\mathbb{R}$, is defined by the following finite-dimensional distributions
\begin{align}
\mathbb{E}e^{i\l \xi_1 Z(t_1)+ \dots + \xi_n Z(t_n) \r} = S _\lambda\l  \frac{1}{2} \langle \bm{\xi}, Q \bm{\xi} \rangle \r, \qquad  \bm{\xi}:=(\xi_1, \dots, \xi_n) \in \R^{n}, 
    \end{align}
    where the square matrix $Q$ is defined by
 $
        Q_{ij} =  t_i \wedge  t_j
   $. When $S_\lambda (t)= e^{-\lambda t}$ it reduces to  Brownian motion.
The special case where $S_\lambda(\cdot)$ has the form $(\ref{mittag leffler distribution})$ was already defined in \cite{facciaroni2025random},  with applications in kinetic theory; its governing equation has the form $\partial^\alpha _t p(x,t)= -2^{-\alpha}(-\Delta)^\alpha p(x,t)$.

Our effort is to embed such para-Markov chains and anomalous diffusion in the subordination scheme (\ref{prima formula}). We prove that a linear process $\sigma (t) =t / \mathcal{L}_\lambda$
%\begin{align}\label{time change lineare}\sigma (t) =t / \mathcal{L}_\lambda \end{align}
is needed, where $\mathcal{L}_\lambda$ is a suitable positive random variable. In this case $\sigma$ clearly has stationary but dependent increments. 
Thanks to this, two main results are proved:
\begin{itemize}
    \item A non-local equation arises, which generalizes Equation \eqref{eq 1} holding in the semi-Markov case only. Such equation reads (see Theorem \ref{Theorem: Governing equation equazione governante}),
    \begin{align*}
        \mathcal{D}_t ^{\mu, -G} u(x,t)= -\lambda u(x,t), \qquad u(x,0)= h(x),
    \end{align*}
     where $G$ is the generator of the underlying Markov process and, for a sufficiently regular $g(\cdot,\cdot)$, the operator on the left hand side is defined as
    \begin{align*}
    \mathcal{D}^{\mu, -G}_t g(x,t) := -\int _0^\infty  G \, \mu (-G\tau )\, (g(x,t)-g(x,t-\tau) ) d\tau + G\int _t^\infty  \mu (-G\tau) g(x,0) d\tau.
    \end{align*}
This is a generalization of the Caputo derivative $\mathcal{D}^\mu_t$, exhibiting an operator version of the Lévy measure.
\item 
    The para-Markov anomalous diffusion arises as a scaling limit of  para-Markov CTRWs (the latter being particular cases of para-Markov chains). Unlike what happens in the semi-Markov setting (see cases (1) and (2)), these CTRWs are such that  the waiting times are dependent. This is one of the main results of this paper.
\end{itemize}
We observe that the linear case $\sigma(t) = t/\L_{\lambda}$ is the most basic example of $\sigma$ with dependent and stationary increments. However, our approach allows to consider more general models, provided that the finite-dimensional distributions of $\sigma$ are known and analytically tractable.

Section \ref{Section: schema concentrico} is then devoted to the study of time-changes of chains, in a more general framework. Specifically, in Subsection \ref{Models  with dependent waiting times} we consider different types of time changes which include the linear time change as a special case, still inducing dependence between waiting times. 
Finally,
Subsection \ref{Section o subsection Stable stabili} is devoted to  the case where  $\sigma$ is an increasing stable process, as studied in \cite{samorodnitsky1994stable}, i.e.~an increasing process such that all the finite-dimensional distributions are multivariate stable. 

As far as we know, these models are novel. Indeed, so far, only the inverse of Lévy stable subordinator has been used for time-changing techniques. We shall devote particular attention  to the sub-case where $\sigma$ is self-similar or has stationary increments; here the non-Markovian chain may possibly exhibit dependent, power law Mittag-Leffler waiting times, generalizing the model presented in \cite{facciaroni2025markov} to distribution that are not of Schur constant type.
\vspace{1cm}

\section{Para-Markov chains and para-Markov anomalous diffusive processes} \label{Section: para makov chains}

Here we are interested in studying a particular class of non-Markovian processes with long-range dependence.
We start with the analysis of a remarkable counting process in Subsection \ref{The case of the counting process}, which turns out to be a clever generalization of the counting process defined in \cite{facciaroni2025markov}. It will be later used to construct the para-Markov chains in Definition \ref{Definizione definition di para markov}. Moreover, in Subsection \ref{The case of the anomalous diffusion} the counting process will be used to define a Continuous-Time Random Walk approximation of the para-Markov anomalous diffusive process, the latter introduced in the same Subsection.
One of the main results of the present Section is the governing equation for such processes, obtained in Subsection \ref{Subsection Governing equation} in a more general framework.

\subsection{The counting process and related chains} \label{The case of the counting process}

Following the semi-Markov theory recalled in  the Introduction, if $N = \{N(t),\ t\in\R_+\}$ is a Poisson process of intensity $\lambda > 0$ and $H$ is a (driftless) subordinator with inverse $L = \{L(t), \ t\in\R_+\}$, then $\{N(L(t)),\ t\in\R_+\}$ is a renewal counting process with i.i.d. waiting times. Each waiting time has survival function of the form
\begin{align} \label{funzione di sopravvivenza}
    S_\lambda(t) = P\l N(L(t)) = 0 \r =\mathbb{E} e^{-\lambda L(t)},
\end{align}
as a consequence of Equation
\eqref{Preliminaries: generica forma del n+1 waiting time dopo il time change}. Indeed, $H$ is Lévy and then has stationary increments. See \cite{meerschaert2011fractional} for details. Moreover, $S_\lambda(\cdot)$ is the eigenfunction of an integro-differential operator, known as generalized Caputo derivative, which, for a sufficiently regular function $g(\cdot)$,   has the form
\begin{align}
\mathcal{D}^{\mu}_t g(t) := \int _0^\infty (g(t)-g(t-\tau) )  \mu (d\tau ) -  \int _t^\infty g(0) \mu (d\tau), \label{operatore per semi markov}
\end{align}
%where $\overline{\mu} (t) := \int _t^\infty \mu (\tau)d\tau$
where $\mu$ is the Lévy measure of $H$.
Then it is true that
    \begin{align} \label{equazione autovalori}
     \mathcal{D}^{\mu}_t  S_\lambda(t) = -\lambda S_\lambda(t), \qquad S_\lambda(0)=1.
    \end{align}
A standard reference for this theory is \cite{meerschaert2019stochastic}.
It is worth recalling that the probability mass function $p(x,t) := P(N(L(t)) = x)$ of the time changed process is governed by the following non-local equation in time
\begin{align}
    \mathcal{D}_t^{\mu} \, p(x,t) = - \lambda (I - B) \, p(x,t), \label{equazione governante fractional poisson process}
\end{align}
where $I$ is the identity operator and $B$ the lag operator, i.e.~$B \, p(x,t) = p(x-1, t), \ x\in\N$ and $p(-1,t) = 0$ for each $t\in\R_+$. For details we suggest to consult \cite{mainardi2000fractional}.

In the following, we shall consider the case where the survival function $S_\lambda (\cdot)$ is completely monotone, i.e.
        \begin{align} \label{mistura di esponenziali}
            S_\lambda(t) = \int_0^{\infty} e^{-s t} \nu_{\lambda}(ds),
        \end{align}
        being $\nu_{\lambda}(\cdot)$ a probability distribution. We shall indicate with $\mathpzc{E}$ the expected value with respect to this distribution.
    Equation \eqref{mistura di esponenziali} defines a mixture of exponential distributions. In  Theorem \ref{Orsingher Ricciuti Toaldo} we shall recall  that Equation (\ref{mistura di esponenziali}) is valid if and only if the Laplace exponent of the subordinator $H$ is a so-called complete Bernstein function. 
Here we recall some basic facts on Bernstein functions; details and proofs can be found in \cite{schilling2012bernstein}.

    A function $f : (0, \infty) \longrightarrow \R$ is said to be a Bernstein function if $f \in C^\infty(0, \infty)$, \ $f(\bfn) > 0$ for each $ \bfn > 0$ and $ (-1)^{n-1} f^{(n)}\l \bfn \r \geq0 \text{ for all } n \in \mathbb{N}_+,\ \bfn > 0$. Each Bernstein function has a unique integral representation,  known as Lévy-Khintchine representation.
Precisely, a function $f(\cdot)$ is a Bernstein function if and only if there exist constants $a,b \in\R_+$ and a measure $\mu$ on $(0, \infty)$ satisfying
$
\int_0^\infty (1 \wedge s) \, \mu(ds) < \infty$
such that
\begin{align*}
f(\bfn) = a + b \bfn + \int_0^\infty \l1 - e^{-\bfn s}\r\, \mu(ds).
\end{align*}
It is known that the class of Bernstein function is in 1-1 correspondence with the class of subordinators.
Indeed each Bernstein function is the Laplace exponent of a subordinator, i.e.~to each Bernstein function $f(\cdot)$ there corresponds a subordinator $H$ such that $\mathbb{E}e^{-\eta H(t)}= e^{-tf(\eta)}$, and vice versa.
    A Bernstein function is said to be complete  if its Lévy measure $\mu$ has a completely monotone density.
    %Here the representation reads
%\begin{align*} f(\bfn) = a + b \bfn + \int_0^\infty \l 1-e^{-\bfn s} \r m(s) ds. \end{align*}
We further recall that if  $g:(0,\infty) \longrightarrow \R$ is a completely monotone function, then it is the Laplace transform of a unique measure $\nu$ on $[0,\infty)$, i.e.~for all $\bfn > 0$ $g(\bfn) = \int_0^{\infty} e^{-\bfn t} \nu(dt)$.
    %Conversely, whenever $\int_0^{\infty} e^{-\bfn t} \nu(dt)$ is finite for every $\bfn > 0$, the function $\bfn \longmapsto \int_0^{\infty} e^{-\bfn t} \nu(dt) $ is completely monotone.
We now recall a characterization of complete Bernstein functions, which has been proved in \cite{orsingher2018semi} (Lemma 7.1).
    \begin{te}\label{Orsingher Ricciuti Toaldo}
        Let $H$ be a strictly increasing subordinator with Laplace exponent $f$ and let $\{L(t),\ t\in\R_+\}$ be the inverse process of $H$. Then
        \begin{align*}
            t\mapsto \mathbb{E} e^{-\lambda L(t)} \quad \text{is completely monotone} \iff f \quad \text{is a complete Bernstein function}.
        \end{align*}
    \end{te}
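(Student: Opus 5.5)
Both directions go through the Laplace transform in $t$ of $S_\lambda(t)=\mathbb{E}e^{-\lambda L(t)}$. For a strictly increasing driftless subordinator $H$ with Laplace exponent $f$ and inverse $L$, the standard identity (conditioning on $H$, or via $P(L(t)\le s)=P(H(s)\ge t)$ and Fubini) gives
\begin{align*}
\widetilde{S}_\lambda(\eta):=\int_0^\infty e^{-\eta t}\,\mathbb{E}e^{-\lambda L(t)}\,dt=\frac{f(\eta)}{\eta\,(\lambda+f(\eta))},\qquad \eta>0.
\end{align*}
I will first establish this formula. Write $\mathbb{E}e^{-\lambda L(t)}=1-\lambda\int_0^\infty e^{-\lambda s}P(L(t)>s)\,ds$ and $P(L(t)>s)=P(H(s)\le t)$. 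Taking Laplace transforms in $t$ gives $\int_0^\infty e^{-\eta t}P(H(s)\le t)\,dt=\eta^{-1}e^{-sf(\eta)}$. Integrating in $s$ yields $\eta^{-1}-\lambda\eta^{-1}(\lambda+f(\eta))^{-1}$, which is the formula above.

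Next I translate complete monotonicity into a statement about Stieltjes functions. Suppose $t\mapsto S_\lambda(t)$ is completely monotone. By Bernstein's theorem, $S_\lambda(t)=\int_{[0,\infty)}e^{-st}\nu_\lambda(ds)$ for a probability measure $\nu_\lambda$ (it is a probability measure since $S_\lambda(0)=1$). Then $\widetilde{S}_\lambda(\eta)=\int (\eta+s)^{-1}\nu_\lambda(ds)$, which is a Stieltjes function. Conversely, if $\widetilde{S}_\lambda$ is a Stieltjes function with no linear term, then inverting the Laplace transform shows that $S_\lambda$ is completely monotone. Indeed, the Stieltjes representation $a/\eta+\int(\eta+s)^{-1}\sigma(ds)$ is the Laplace transform of $a+\int e^{-st}\sigma(ds)$, and uniqueness of Laplace transforms applies since $S_\lambda$ is continuous. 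So the statement reduces to the equivalence
\begin{align*}
\eta\mapsto\frac{f(\eta)}{\eta(\lambda+f(\eta))}\ \text{is Stieltjes}\iff f\ \text{is complete Bernstein}.
\end{align*}

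For this equivalence I will use the structural results of \cite{schilling2012bernstein}. These are: $f$ is complete Bernstein iff $f(\eta)/\eta$ is Stieltjes; $g$ is Stieltjes iff $1/g$ is complete Bernstein (for $g\not\equiv0$); and complete Bernstein functions are closed under positive sums. The reverse implication ($\Leftarrow$) then goes as follows. If $f$ is complete Bernstein, then so is $\lambda+f$, hence $\eta/(\lambda+f(\eta))$ is complete Bernstein. Moreover, $\eta/f(\eta)$ is complete Bernstein, since $f^\star(\eta)=\eta/f(\eta)$ is the conjugate complete Bernstein function. So the reciprocal
\begin{align*}
\frac{\eta(\lambda+f(\eta))}{f(\eta)}=\lambda\frac{\eta}{f(\eta)}+\eta
\end{align*}
is a sum of complete Bernstein functions, hence complete Bernstein. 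Its reciprocal, which is $\widetilde{S}_\lambda$, is therefore Stieltjes. For the forward implication ($\Rightarrow$), if $\widetilde{S}_\lambda$ is Stieltjes then $\lambda\,\eta/f(\eta)+\eta$ is complete Bernstein. I then need to deduce that $\eta/f(\eta)$ is complete Bernstein, i.e.\ that $f$ is complete Bernstein.

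I expect the forward direction to be the main obstacle, because subtracting $\eta$ from a complete Bernstein function need not preserve the class in general. I will handle it through the Nevanlinna (Pick) characterization: a function is complete Bernstein iff it extends analytically to $\mathbb{C}\setminus(-\infty,0]$, maps the upper half-plane into the closed upper half-plane, and is nonnegative on $(0,\infty)$. Write $h(\eta)=\lambda\eta/f(\eta)+\eta$. Then $\eta/f(\eta)=(h(\eta)-\eta)/\lambda$ is analytic on $\mathbb{C}\setminus(-\infty,0]$, because $f$ is a Bernstein function that is nonvanishing off the negative axis. What remains is to check that $\operatorname{Im}\bigl(\eta/f(\eta)\bigr)\ge0$ on the upper half-plane. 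To do this I plan to pass to the $\lambda\to$ dependence. The hypothesis is assumed only for the given fixed $\lambda$, so I will exploit it through the representation of $h$: the linear coefficient of $h$ at infinity equals $1+\lambda\lim_{\eta\to\infty}1/f(\eta)$, and $f$ is driftless. If that route stalls, I will fall back on the direct argument of \cite{orsingher2018semi}, Lemma 7.1, which matches the Lévy densities through the identity $\mathcal{D}^\mu_tS_\lambda=-\lambda S_\lambda$ and shows that the tail $\overline{\mu}$ is a mixture of exponentials.
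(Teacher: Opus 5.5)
Your reduction is sound. The identity $\widetilde S_\lambda(\eta)=f(\eta)/(\eta(\lambda+f(\eta)))$ is correctly derived, and it does not even need $H$ driftless. Complete monotonicity of $S_\lambda$ is equivalent to $\widetilde S_\lambda$ being a Stieltjes function. In the converse, what must vanish is the \emph{constant} term of the Stieltjes representation, not a linear term; it does vanish, because $0\le\widetilde S_\lambda(\eta)\le 1/\eta$. Your proof that $f$ complete Bernstein implies $S_\lambda$ completely monotone, via conjugation and the cone property, is complete.

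The forward implication, however, is not proved. You reach the statement that $h(\eta)=\lambda\eta/f(\eta)+\eta$ is complete Bernstein and declare the removal of $\eta$ to be the main obstacle. You then leave the inequality $\mathrm{Im}\,(\eta/f(\eta))\ge 0$ unverified and fall back on the cited lemma. Your reason for holomorphy of $\eta/f(\eta)$ on $\mathbb{C}\setminus(-\infty,0]$ is also wrong. A general Bernstein function is only guaranteed to extend holomorphically, and zero-free, to the right half-plane. The extension here really comes from $\eta/f(\eta)=(h(\eta)-\eta)/\lambda$ with $h$ complete Bernstein.

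The obstacle is illusory, and a fact you already wrote down removes it. Write $h(\eta)=a+b\eta+\int_0^\infty(1-e^{-\eta s})m(s)\,ds$ with $m$ completely monotone. Then $b=\lim_{\eta\to\infty}h(\eta)/\eta\ge 1$, simply because $h(\eta)\ge\eta$; in fact $b=1$, since a strictly increasing subordinator has $f(\eta)\to\infty$. Hence
\[
\frac{\lambda\eta}{f(\eta)}=h(\eta)-\eta=a+(b-1)\eta+\int_0^\infty\bigl(1-e^{-\eta s}\bigr)m(s)\,ds
\]
is again complete Bernstein. Conjugation ($g$ is complete Bernstein iff $\eta/g(\eta)$ is) then shows that $f$ is complete Bernstein. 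In the Nevanlinna picture this is the same observation: from $h(z)=a+bz+\int_{(0,\infty)}\frac{z}{z+s}\,\sigma(ds)$ one gets $\mathrm{Im}\,h(z)\ge b\,\mathrm{Im}\,z\ge\mathrm{Im}\,z$ on the upper half-plane.

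With this line added, your argument is a complete proof along the standard Laplace-transform/Stieltjes route. The paper gives no proof of its own here and simply cites Lemma 7.1 of \cite{orsingher2018semi}.
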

    For a rich list of complete Bernstein function, the reader can consult \cite{schilling2012bernstein}. Here we report a well known example.

  \begin{ex} \label{Esempio fractional PP, con distribuzione Mittag-Leffler}
  If $H$ is the $\alpha$-stable subordinator, then $f(\bfn ) = \bfn^{\alpha},\ \bfn > 0,\ \alpha \in (0,1]$, which is a complete Bernstein function. The resulting renewal process is the so-called fractional Poisson process (see \cite{beghin2009fractional,mainardi2004fractional,meerschaert2011fractional} and also \cite{kataria2022generalized} for related models), whose waiting times follow a so-called Mittag-Leffler distribution, i.e.~they have survival function $S_\lambda(t)=\mathbb{E} e^{-\lambda L(t)} = E_{\alpha}\l -\lambda t^{\alpha} \r$, where 
    \begin{align}
        E_{\al}(z) := \sum_{k = 0}^{\infty} \frac{z^k}{\Gamma(1+\al k)}, \qquad z\in\mathbb{C}, \label{Definizione mittag leffler function funzione}
    \end{align}
    is the one-parameter Mittag-Leffler function.
    In this case, one has
    \begin{align*}
        E_{\alpha}\l -\lambda t^{\alpha} \r = \int_0^{\infty} e^{-s  t} \nu_{\lambda}(ds),
    \end{align*}
    where $\nu_{\lambda}(ds)$ is the Lamperti distribution. For $\alpha < 1$, it admits the following representation
    \begin{align*}
        \nu_{\lambda}(ds) = \frac{\sin(\pi \alpha)}{\pi}\frac{\lambda s^{\alpha - 1}}{s^{2\alpha} + 2\lambda s^{\alpha} \cos(\pi\alpha) + \lambda^2} ds.
    \end{align*}
    Details can be found in \cite{james2010lamperti}.
\end{ex}

%As we have recalled, any renewal counting process is built from a sequence of independent waiting times, each following the same non-negative distribution.
What is missing in the above semi-Markov framework is the possibility to model counting processes in which the waiting times are no longer independent; we shall consider this case, while still preserving  the same marginal distribution \eqref{funzione di sopravvivenza}. Such a construction is essential for non-Markovian phenomena, where memory effects induce dependence between successive waiting times. %\cite{Tejedor2010}. 
The goal here is precisely to define a counting process that maintains the renewal-like structure of identically distributed waiting times but relaxes the independence assumption. We shall assume a Schur constant dependence, the latter discussed in details in Subsection \ref{Section: schema concentrico}.
% in order to have an analytically tractable counting process. For a discussion of this dependence structure we refer to Subsection \ref{Section: schema concentrico}.

    \begin{defin} \label{Definition of exchangeable fractional PP}
Let $\{J_k,\ k\in\N_+\}$ be a sequence of non-negative  random variables, such that, for all $n\in \N_+$ we have
\begin{align}
        P[J_1 > t_1,\ \ldots,\ J_n > t_n] = S_\lambda\left( \sum_{k=1}^n t_k\right) \label{Definizione di new fractional: congiunta degli intertempi}
    \end{align}
 where $t_k\geq0,\ k \in\{1,\ldots,n\}$ and $S_\lambda(\cdot)$ has the form of Equation \eqref{mistura di esponenziali}.
Then the process $\mathcal{N} = \{\mathcal{N}(t),\ t\in\R_+\}$ defined by
    \begin{align*}
        \mathcal{N}(t) := n \qquad t \in \left[ \sum_{k=1}^n J_k, \sum_{k=1}^{n+1} J_k \right),\quad t\in\R_+,\ n\in\N,
    \end{align*}
    is said to be para-Markov counting process.
\end{defin}

The above counting process is a non-Markovian counterpart of the Poisson process and goes beyond even the fractional Poisson process and renewal counting processes, thanks to long-range dependence.
For $S_{\lambda}(t) = e^{-\lambda t}$ we obtain the Poisson process $N$ of parameter $\lambda$ as a particular case. Other parameters of the above process depend on the chosen $S_{\lambda}(\cdot)$, i.e.~on the measure $\nu_{\lambda}(\cdot)$.
The special case of $S_{\lambda}(t) = E_{\alpha}(-\lambda t^{\alpha})$ as in Example \ref{Esempio fractional PP, con distribuzione Mittag-Leffler} has been already considered in \cite{facciaroni2025markov}.

As we have recalled in the Introduction, a counting process can be used to construct continuous-time chains. Specifically, using the above non-Markovian counting process, the resulting chains are completely non-Markovian. Let us properly define such processes.

\begin{defin} \label{Definizione definition di para markov}
    Let $\tilde{M} = \lg\tilde{M}(n),\ n\in\N\rg$ be a discrete-time Markov chain. Let  $\mathcal{N}$ be the para-Markov counting process as in Definition \ref{Definition of exchangeable fractional PP}. Then the process $Y = \{Y(t),\ t\in\R_+\}$ where 
    \begin{align*}
        Y(t) := \tilde{M} (\mathcal{N}(t)), \qquad t\in\R_+,
    \end{align*}
    is said to be a para-Markov chain.
\end{defin}

Parameters of this process are the same of the underlying $\mathcal{N}$ as well as of the chain $\tilde{M}$.
Note that this construction includes a large class of models: there are several complete Bernstein functions, each of which can be used to model different types of dependence between waiting times. Please note that the name \textit{para-Markov chains} has already been used in \cite{facciaroni2025markov} for the special case of Mittag-Leffler waiting times.

We are interested in the distributional properties of such non-Markovian processes. By definition of para-Markov chains, it will suffice to study the distributional properties of the counting process. The following proposition makes it analytically tractable. The symbol $\overset{d}{=}$ means equality of finite-dimensional distributions.

\begin{prop} \label{Proposition: uguaglianza in distribuzione del processo di conteggio}
    Let us consider the para-Markov counting process $\mathcal{N}$ and a Poisson process $N$ of intensity 1. Then there exists a non-negative random variable $\L_{\lambda}$ such that 
    \begin{align*}
        \mathcal{N}(t) \overset{d}{=} N \l t \L_{\lambda} \r, \qquad t\in\R_+.
    \end{align*}
\end{prop}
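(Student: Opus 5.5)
The idea is to take $\L_{\lambda}$ with law $\nu_\lambda$, the mixing measure in \eqref{mistura di esponenziali}, chosen independent of the unit-rate Poisson process $N$, and to show that the process $\{N(t\L_\lambda),\ t\in\R_+\}$ has exactly the joint waiting-time law \eqref{Definizione di new fractional: congiunta degli intertempi}. Since both $\mathcal{N}$ and $N(\cdot\,\L_\lambda)$ are counting processes with unit jumps determined pathwise by their sequences of waiting times, equality of the joint laws of all the waiting times gives equality of the finite-dimensional distributions.

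\emph{Step 1: waiting times of the time-changed Poisson process.} Let $W_1, W_2,\dots$ be the i.i.d.\ standard exponential waiting times of $N$. Then $N(t\L_\lambda)$ jumps at the times $T_n/\L_\lambda$, where $T_n=\sum_{k\le n}W_k$. Its waiting times are therefore $J'_k = W_k/\L_\lambda$; when $\L_\lambda=0$ they are all $+\infty$, and the process stays at $0$.

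\emph{Step 2: joint survival function.} Condition on $\L_\lambda=s$ and use the independence of the $W_k$:
\begin{align*}
P\l J'_1>t_1,\dots,J'_n>t_n\r = \int_0^\infty \prod_{k=1}^n P(W_k> s t_k)\, \nu_\lambda(ds) = \int_0^\infty e^{-s\sum_k t_k}\,\nu_\lambda(ds) = S_\lambda\Big(\sum_{k=1}^n t_k\Big).
\end{align*}
This matches \eqref{Definizione di new fractional: congiunta degli intertempi} for every $n$. A mass of $\nu_\lambda$ at $0$ is harmless: there $e^{-0\cdot t}=1$, which corresponds to infinite waiting times. By Kolmogorov consistency, the full sequences $(J_k)_k$ and $(J'_k)_k$ then have the same law on $[0,\infty]^{\N_+}$.

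\emph{Step 3: transfer to finite-dimensional distributions.} For $0\le t_1<\dots<t_m$, the event $\{\mathcal{N}(t_i)=n_i,\ i\le m\}$ equals $\{\sum_{k\le n_i}J_k\le t_i<\sum_{k\le n_i+1}J_k,\ i\le m\}$. This is a measurable function of the waiting-time sequence, and the same function applied to $(J'_k)_k$ describes the corresponding event for $N(\cdot\,\L_\lambda)$. Hence the two probabilities coincide, which is the claim.

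The main obstacle is Step 3, though only mildly: one has to check that joint survival functions determine the joint law, since orthant probabilities $P(J_i>t_i,\ i\le n)$ determine the distribution on $\R_+^n$. One also has to make sure the counting map is handled consistently when some waiting times are infinite, or when $\L_\lambda=0$ with positive probability. If needed, the latter case can be excluded by noting that $S_\lambda(t)\to0$ as $t\to\infty$ forces $\nu_\lambda(\{0\})=0$.
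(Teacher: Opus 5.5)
Your proposal is correct and is essentially the paper's proof. You take $\L_\lambda\sim\nu_\lambda$ independent of $N$, identify the waiting times of $N(t\L_\lambda)$ as $W_k/\L_\lambda$, and condition on $\L_\lambda$ to obtain $S_\lambda(\sum_k t_k)$; your Step 3 and the remarks on $\nu_\lambda(\{0\})$ and orthant probabilities spell out the reduction from waiting-time laws to finite-dimensional distributions that the paper only asserts is ``sufficient''.
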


\begin{proof}
    The survival function $S_{\lambda}(\cdot)$ is completely monotone, so that there exists a probability measure $\nu_{\lambda}(\cdot)$ such that
    \begin{align*}
        S_{\lambda}(t) = \int_0^{\infty}e^{-st} \nu_{\lambda}(ds), \qquad \forall t\in\R_+.
    \end{align*}
    Let $\L_{\lambda}$ be a random variable whose distribution is $\nu_{\lambda}(\cdot)$.
    Consider that
    %We call $\tilde{M} = \{\tilde{M}(t),\ t\in\R_+\}$ the embedded Markov chain, i.e.
    \begin{align*}
        N(t) = n, \qquad \sum_{k = 1}^n W_k \leq t < \sum_{k = 1}^{n+1} W_k, \qquad n\in\N_+,
    \end{align*}
    where $W_k$ are i.i.d. exponential of parameter $1$. Considering the random scaling  $t\longrightarrow t \L_{\lambda}$, one has
    \begin{align*}
        N(t \L_{\lambda}) = n, \qquad \sum_{k = 1}^n W_k \leq t \L_{\lambda} < \sum_{k = 1}^{n+1} W_k,
    \end{align*}
    so that the waiting times $\{J_k,\ k\in\N_+\}$ of $\{N(t \L_{\lambda}),\ t\in\R_+\}$ take the form $J_k = W_k/\L_{\lambda}$.
    Thus, to prove that $N(t \L_{\lambda})$ coincides with $\mathcal{N}(t)$ in the sense of finite-dimensional distributions,  it is sufficient to show that the sequence of waiting times  $\{J_k,\ k\in\N_+\}$ of $N(t \L_{\lambda})$ has  joint distribution given by Equation \eqref{Definizione di new fractional: congiunta degli intertempi}. This can be done using a conditioning argument on the values of $\L_{\lambda}$, as follows
    \begin{align*}
        P \lq \frac{W_1}{\L_{\lambda}} > t_1,\ \ldots,\ \frac{W_n}{\L_{\lambda}} > t_n \rq  =
        \int_0^{\infty}e^{-l \sum_{k=1}^n t_k}\nu_{\lambda}(dl) \\
         = \mathpzc{E}\left[e^{- \L_{\lambda} \sum_{k=1}^n  t_k }\right]
         = S_{\lambda} \left(\sum_{k=1}^n  t_k\right),
    \end{align*}
    where the last equality follows from Equation \eqref{mistura di esponenziali}.
    This completes the proof.
\end{proof}

We stress that the process under consideration cannot be trivially characterized as a mixture of Markovian processes. Indeed, its definition relies on Equation \eqref{Definizione di new fractional: congiunta degli intertempi}, and the resulting dynamics is genuinely non-Markovian due to the dependence structure among the waiting times. Proposition \ref{Proposition: uguaglianza in distribuzione del processo di conteggio} merely establishes an equality in distribution, which follows directly from the defining construction of the process.
\begin{comment}
We observe that, using the notation of Proposition \ref{Proposition: uguaglianza in distribuzione del processo di conteggio}, the Equation \eqref{mistura di esponenziali} can be written as
\begin{align}
    S_{\lambda}(t) = \mathpzc{E} e^{- t \L_{\lambda}}, \label{S(t) come valore atteso rispetto a L}
\end{align}
where $\L_{\lambda}$ has distribution $\nu_{\lambda}(\cdot)$.
\end{comment}

\begin{cor} \label{Corollario: uguaglianza in distribuzione per le catene}
    Let us consider the process $Y$ as in Definition \ref{Definizione definition di para markov} and the continuous-time Markov chain $M = \lg M(t),\ t\in\R_+\rg$, where $M(t) = \tilde{M}(N(t))$ and $N$ is the Poisson process of intensity $1$. Then there exists a non-negative random variable $\L_{\lambda}$ such that 
    \begin{align*}
        Y(t) \overset{d}{=}M(\L_{\lambda} t), \qquad t\in\R_+.
    \end{align*}
\end{cor}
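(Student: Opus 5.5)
The plan is to reduce the statement to Proposition \ref{Proposition: uguaglianza in distribuzione del processo di conteggio}, using the fact that both processes are built by composing the same discrete-time chain $\tilde{M}$ with a counting process independent of it. I take $\L_{\lambda}$ with law $\nu_{\lambda}$, as in that proof, independent of both $N$ and $\tilde{M}$; this independence is implicit in the construction of $M(\L_\lambda t)$. Then $M(\L_{\lambda} t) = \tilde{M}(N(\L_{\lambda} t))$ and $Y(t) = \tilde{M}(\mathcal{N}(t))$. It is implicit in Definition \ref{Definizione definition di para markov} that $\mathcal{N}$ is independent of $\tilde{M}$, and I will state this explicitly.

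The key step is the following general observation. Let $K_1$ and $K_2$ be counting processes, each independent of $\tilde{M}$, with $K_1 \overset{d}{=} K_2$ in the sense of finite-dimensional distributions. Then $\tilde{M}(K_1(\cdot)) \overset{d}{=} \tilde{M}(K_2(\cdot))$. To see this, fix times $t_1,\ldots,t_n$ and states $x_1,\ldots,x_n$, and condition on the values of the counting process. This gives
\begin{align*}
P\lq \tilde{M}(K_i(t_1))=x_1,\ldots,\tilde{M}(K_i(t_n))=x_n \rq = \sum_{k_1,\ldots,k_n\in\N} P\lq \tilde{M}(k_1)=x_1,\ldots,\tilde{M}(k_n)=x_n\rq \, P\lq K_i(t_1)=k_1,\ldots,K_i(t_n)=k_n\rq ,
\end{align*}
and the right-hand side depends on $K_i$ only through its finite-dimensional law. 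I apply this with $K_1=\mathcal{N}$ and $K_2(t)=N(t\L_{\lambda})$. Proposition \ref{Proposition: uguaglianza in distribuzione del processo di conteggio} gives $K_1\overset{d}{=}K_2$, and its proof shows that the waiting times of $N(t\L_\lambda)$ have the Schur-constant joint law \eqref{Definizione di new fractional: congiunta degli intertempi}. This identification of the waiting-time law is what upgrades the marginal statement to equality of all finite-dimensional distributions.

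The main obstacle is this last point: the Proposition is stated for each fixed $t$, but the conclusion needs equality of finite-dimensional distributions. I handle it by making explicit that equality of the joint laws of all waiting times, which is what the proof of the Proposition establishes, determines the full law of the counting process. Indeed, for $t_1<\dots<t_n$, the event $\{K(t_1)=k_1,\ldots,K(t_n)=k_n\}$ is a measurable set defined by the partial sums $\sum_{j\le k_i}J_j\le t_i<\sum_{j\le k_i+1}J_j$, so its probability is a functional of the joint law of $(J_1,\ldots,J_{k_n+1})$. The second point to check is that $\tilde{M}$ is independent of $(N,\L_{\lambda})$, which holds because $\L_\lambda$ is chosen independently. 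With these two points in place, the corollary follows, and $\L_\lambda$ is the same random variable with law $\nu_\lambda$.
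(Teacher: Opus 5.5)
Your proposal is correct and follows the same route as the paper, which simply combines $Y(t)=\tilde{M}(\mathcal{N}(t))$ with Proposition \ref{Proposition: uguaglianza in distribuzione del processo di conteggio}; you additionally make explicit the conditioning on the counting process and the independence of $\tilde{M}$ from $(N,\L_\lambda)$, both of which the paper leaves implicit. The ``obstacle'' you identify is already covered by the paper's convention that $\overset{d}{=}$ denotes equality of finite-dimensional distributions, and your waiting-time argument is exactly what justifies that convention in the Proposition's proof.
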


\begin{proof}
    The result follows from the definition of $Y(t)$ and Proposition \ref{Proposition: uguaglianza in distribuzione del processo di conteggio}.
\end{proof}

\subsubsection{Heuristic derivation of governing equations}

Due to the dependence structure among waiting times \eqref{Definizione di new fractional: congiunta degli intertempi}, we expect a non-local equation in space-time variables for the probability mass function $p(x,t) := P(\mathcal{N}(t) = x)$. We now present some considerations in this direction, obtaining a heuristic result which will be made rigorous in Subsection \ref{Subsection Governing equation} for a larger class of processes.

Inspired by arguments of operational functional calculus, we can re-write  (\ref{equazione autovalori}) as
\begin{align} 
\label{eq intermedia}
f\biggl (\frac{d}{dt} \biggr ) S_\lambda(t)- \overline{\mu} (t)= -\lambda S_\lambda(t) 
\end{align}
where $f$ is the Bernstein function of the underlying subordinator and $\overline{\mu}$ is the tail of the Lévy measure.
Using a conditioning argument and Proposition \ref{Proposition: uguaglianza in distribuzione del processo di conteggio}, the Laplace transform of $\mathcal{N}(t)$ reads
    \begin{align*}
        \E e^{-\eta \mathcal{N}(t)} = \mathpzc{E} e^{- t \l1 - e^{-\eta} \r \L_{\lambda} }  =: A(\eta, t), \qquad \bfn > 0, \quad t\in\R_+.
    \end{align*}
    Because of the representation of $S_\lambda(t)$ in Equation \eqref{mistura di esponenziali},  it can be seen that $A(\eta, t) =S_\lambda \l t \l1 - e^{-\eta}\r \r$. Now, Equation (\ref{eq intermedia}) holds for each $t$, so also for $t(1-e ^{-\eta})$. Hence we get (remember that $A(\eta,0)=1$)
    \begin{align*}
        f\l \l 1 - e^{-\eta}\r^{-1}  \frac{\partial}{ \partial t} \r A(\eta, t) - \overline{\mu} (t(1-e^{-\eta}))A(\eta, 0)= -\lambda A(\eta, t).
    \end{align*}
We now need to return to the $x$ variable by Laplace inversion. Note that $\l 1 - e^{-\eta}\r^{-1}$ is the Laplace symbol of an operator that we identify as $(I-B)^{-1}$. Thus we heuristically  arrive at the following equation 
 \begin{align}
        f \l (I-B)^{-1} \frac{\partial }{\partial t} \r p(x, t) - \overline{\mu}\l t (I-B) \r p(x,0)= -\lambda  p(x, t), \qquad p(x,0) = \delta(x). \label{Equazione euristica del new fractional Poisson}
\end{align}    
In order to support the above intuition, we need to briefly discuss the meaning of $(I-B)^{-1}$ and the space of functions on which the operator will act.
We shall use the standard notation for the following Banach spaces $\l l^1, ||\cdot||_1 \r ,\ ||a||_1 := \sum_{k = 0}^{\infty} |a_k|  \ \text{for} \ a\in l^1$ and $\l l^\infty, ||\cdot||_\infty \r ,\ ||a||_\infty := \sup_{k \in\N} |a_k| \ \text{for} \ a\in l^\infty$.
\begin{comment}
\begin{align*}
    \l l^1, ||\cdot||_1 \r , &\qquad ||a||_1 := \sum_{i = 0}^{\infty} |a_i| \quad \text{for} \quad a\in l^1,  \\
    \l l^\infty, ||\cdot||_\infty \r , &\qquad ||a||_\infty := \sup_{i \geq 0} |a_i| \quad \text{for} \quad a\in l^\infty.
\end{align*}
\end{comment}
The operator is defined by

\begin{align*}
    (I-B)^{-1} : l^1 &\longrightarrow l^{\infty} \\
     \l a_n,\ n\in\N \r =: a &\longmapsto (I-B)^{-1} a := \l \sum_{k = 0}^n a_k,\ n\in\N \r.
\end{align*}
This operator coincides with the resolvent of $B$, i.e.~$R(z) := (I-zB)^{-1}$, evaluated in $z = 1$ (see Paragraph 3 in \cite{applebaum2009levy} for details). For this reason, we shall use the notation $(I-B)^{-1} =: R$. The operator $R$ is linear and continuous from $l^1$ to $l^{\infty}$, and hence bounded. Indeed, for each $a,b\in l^1$ and for each $\al,\beta \in \mathbb{C}$, one has
\begin{align*}
        R (\alpha a+\beta b)
        %&= \l \sum_{k = 0}^n (\alpha a_n + \beta b_n),\ n\in\N \r 
         &= \alpha \l \sum_{k = 0}^n a_n,\ n\in\N \r + \beta \l \sum_{k = 0}^n b_n,\ n\in\N \r
        = \al R a + \beta R b
    \end{align*}
as required. Moreover, for each $\varepsilon > 0$, one can choose $\delta = \varepsilon$ to get

\begin{align*}
    || a - b ||_1 = \sum_{i = 0}^{\infty} |a_i-b_i| <\delta \implies || R a - R b ||_{\infty} =  \sup_{n\in\N_+} \left| \sum_{k = 0}^n (a_k - b_k) \right| < \varepsilon.
\end{align*}

\begin{comment}
\begin{enumerate}
    \item Mostriamo che è continuo. Si ha
    \begin{align*}
        || R a - R b ||_{\infty} = \sup_{n\in\N_+} | \sum_{k = 0}^n (a_k - b_k) | < \varepsilon \impliedby | \sum_{k = 0}^n (a_k - b_k) | < \varepsilon \quad for \ each\ n\in\N.
    \end{align*}
    Scegliendo $\delta = \varepsilon$ si ha
    \begin{align*}
        || a - b ||_1 = \sum_{i = 0}^{\infty} |a-b_i| <\delta \implies || R a - R b ||_{\infty} < \varepsilon.
    \end{align*}
\end{enumerate}
\end{comment}

The following remark gives a probabilistic interpretation of $R$.

\begin{os}
    Let us consider a sequence of probability masses $p = (p(x, t),\ x\in\N)$ for a fixed $t\in\R_+$. Then
    \begin{align*}
        (I-B)^{-1} p(x,t) = \sum_{k = 0}^x p(k,t) = P(\mathcal{N}(t) \leq x).
    \end{align*}
    The operator turns out to have the property that for each $t$, if applied to $p(x,t)$, gives the c.d.f. of the counting process at time $t$ as output.
\end{os}

%Passing $\eta \longrightarrow x$ one heuristically obtains \eqref{equazione governante fractional poisson process} as expected.

\begin{os}
    An equation of type \eqref{Equazione euristica del new fractional Poisson} has been rigorously obtained in \cite{facciaroni2025markov} in  the special case $f(\eta) = \eta^\alpha$. Consistently with the above heuristic discussion, in this particular case the space and time operators separate so that Equation \eqref{Equazione euristica del new fractional Poisson} becomes
    \begin{align}
    \label{FCAA}
        \l\frac{\partial }{\partial t}\r^{\al} p(x,t) = -\lambda ^\alpha \l I-B\r^{\al} p(x,t),
    \end{align}
    being $(\partial / \partial t )^{\al}$ the Caputo fractional derivative. The process with distribution $p(x,t)$ that solves this equation has been named exchangeable fractional Poisson process. It is characterized by Schur constant dependence between waiting times, marginally distributed according to the Mittag-Leffler distribution. We emphasize that it is a special case of para-Markov counting process of Definition \ref{Definition of exchangeable fractional PP}. Indeed, the solution of \eqref{FCAA} is a particular case of the theory discussed here. 
\end{os}

%\textcolor{blue}{ES: Possiamo definire questo operatore usando la (3.3) attuale.}

\subsection{Governing equations} \label{Subsection Governing equation}
    Proposition \ref{Proposition: uguaglianza in distribuzione del processo di conteggio} and Corollary \ref{Corollario: uguaglianza in distribuzione per le catene} inspire the following problem.
    Let $M$ be a Markov  process. A well-established theory states that it is governed by a local equation of the form $(\partial/\partial t)\, u(x,t)= Gu(x,t)$, where  $G$ is the infinitesimal generator of $M$. Here we aim to find a governing equation for a non-Markovian process $Y = \{Y(t),\ t\in\R_+\}$, such that
    \begin{align}
        Y(t) \overset{d}{=} M(\mathcal{L}_\lambda t), \label{subordinato}
    \end{align}
    and $\mathcal{L}_\lambda$ is a random variable with distribution $\nu _\lambda$ defined in Equation (\ref{mistura di esponenziali}).
By non-Markovianity, we shall see that a non-local equation arises, exhibiting a pseudo-differential operator in time-space variables. 

In the semi-Markov case, the governing equation is well known. Indeed, as recalled in the Introduction, if $M$ is a Markov process and $L$ is the inverse of a subordinator $H$ independent of $M$, then the semi-Markov process $\{ M(L(t)),\ t\in\R_+\}$ is governed by
\begin{align} \label{semi markov}
   \mathcal{D}^{\mu}_t  u(x,t)= G u(x,t) 
\end{align}
where $G$ is the generator of $M$ and $\mathcal{D}_t^{\mu}$ is defined in \eqref{operatore per semi markov}. 

In Theorem \ref{Theorem: Governing equation equazione governante} we shall see that the governing equation of (\ref{subordinato})
is much more complicated given that the the spacial and the time operators are not, in general, de-coupled. We shall work in the particular case in which $M$ is a Lévy process.
To derive such governing equations, we need to define a new operator as follows
\begin{align}
\mathcal{D}^{\mu, -G}_t w(x,t) := -\int _0^\infty  G \, \mu (-G\tau )\, (w(x,t)-w(x,t-\tau) ) d\tau + G\int _t^\infty  \mu (-G\tau) w(x,0) d\tau.
\label{Para Markov non-local operator}
\end{align}
acting on a sufficiently regular function $w$.
\begin{comment}observe that, if $t\longmapsto \mu (t)$ is a Lévy density, then, for any $a>0$, the function $t\longmapsto a\mu (at)$ is also a Lévy density; its  associated Generalized Caputo derivative has the form
\begin{align*}
\mathcal{D}^{\mu, a}_t h(t)= \int _0^\infty (h(t)-h(t-\tau) ) a \mu (a\tau ) d\tau - h(0)\int _t^\infty a\mu (a\tau) d\tau
\end{align*}
with the notation $\mathcal{D}^{\mu, 1} = \mathcal{D}^{\mu} $.
\end{comment}
The term $G\mu (-G\tau)$ is meant in the following sense: since, by assumption, the Lévy measure has a completely monotone density, say $\mu (\cdot)$, then there exist a unique measure $\kappa$ such that
$$\mu (t)= \int _0^\infty e^{-tz} \kappa (dz)$$
and thus 
\begin{align}
    G\mu (-G\tau) w(x,t):=G \int _0^\infty \mathcal{T}_{\tau z} w(x,t) \kappa(dz) \label{G mu(-Gtau) parte dell'operatore}
\end{align}
where $\lg \mathcal{T}_t,\ t\in\R_+\rg$ is the semigroup generated by $G$.
\begin{os}
The operator \eqref{Para Markov non-local operator} can be heuristically constructed by means of $\mathcal{D}_t^\mu$ in formula \eqref{operatore per semi markov} in the following way.  Consider that the Caputo operator $\mathcal{D}_t ^\mu$ is associated to the Lévy density $\mu (t)$. For each $a>0$, the function $t\longmapsto a\mu (at)$ is also a Lévy density, and the associated Caputo operator is defined by
\begin{align}
\mathcal{D}^{\mu, a}_t h(t):= \int _0^\infty a \mu (a\tau ) (h(t)-h(t-\tau) )  d\tau - h(0)\int _t^\infty a\mu (a\tau) d\tau, \label{non lo so}
\end{align}
with the notation $\mathcal{D}^{\mu, 1} =: \mathcal{D}^{\mu} $. Being $G$ a spectrally negative operator, then one can substitute  $a$ with $-G$ and obtain the new definition.
\end{os}

We are now ready to state the theorem. We shall indicate with $\langle\cdot,\cdot\rangle$ the usual inner product.
We shall use $\mathcal{S}\l\mathbb{R}^n\r$ for the Schwartz space of functions on $\R^n$.
For  a function $h\in \mathcal{S}\l\mathbb{R}^n\r $, we shall use the following notation 
\begin{align*}
    \hat{h}\l\bm{\xi}\r &:= \int_{\R^n}e^{- i \langle \bm{\xi}, \bm{x} \rangle} h(\bm{x}) d\bm{x}, \quad \bm{\xi}\in \R^n, \\ 
    h\l\bm{x}\r &:= \frac{1}{(2\pi)^n}\int_{\R^n}e^{i \langle \bm{\xi}, \bm{x} \rangle} \hat{h}(\bm{\xi}) d\bm{\xi}, \quad \bm{x}\in \R^n.
\end{align*}
for the Fourier transform of $h$ and the inverse Fourier transform of $\hat{h}$ in the space variable.
Let $\L_{\lambda}$ be a random variable whose distribution $\nu_{\lambda}(\cdot)$ satisfies \eqref{mistura di esponenziali} for a survival function $S_{\lambda}(\cdot)$.

\begin{te} \label{Theorem: Governing equation equazione governante}
    Let  $M = \lg M(t),\ t\in\R_+ \rg$ be a Lévy process on $\mathbb{R}^n,\ n\in\N_+$. 
    Consider a process $Y = \{ Y(t),\ t\in\R_+ \}$, where $Y(t) \overset{d}{=} M\l \L _\lambda t \r$ for each $t\in\R_+$ and $u(\bm{x},t) := \E h(\bm{x} + Y(t))$ for $h\in \mathcal{S}\l \mathbb{R}^n \r$. 
    Then the map $t\longmapsto u(\bm{x},t)$  solves the Cauchy problem
    \begin{align}
       \mathcal{D}_t ^{\mu, -G} w(\bm{x},t)= -\lambda w(\bm{x},t) \qquad w(\bm{x},0) = h(\bm{x}),       \label{Governing equation equazione governante}
    \end{align}
    where the operator $\mathcal{D}_t ^{\mu, -G}$ is defined in Equation \eqref{Para Markov non-local operator}.
\end{te}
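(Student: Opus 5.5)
Pass to Fourier space, where $G$ becomes multiplication by its Lévy symbol and the operator equation collapses to the scalar eigenvalue equation \eqref{equazione autovalori} evaluated at a rescaled time.

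\textbf{Step 1: compute $\hat u$.} Let $\Psi$ be the Lévy exponent of $M$, so $\E e^{i\langle\bm{\xi},M(t)\rangle}=e^{-t\Psi(\bm{\xi})}$ with $\mathrm{Re}\,\Psi\ge 0$. Then $G$ acts on $\mathcal{S}(\R^n)$ as the Fourier multiplier with symbol $-\Psi(\bm{\xi})$ (up to the sign convention $\bm{\xi}\mapsto-\bm{\xi}$ induced by $u=\E h(\bm{x}+Y(t))$). Likewise $\mathcal{T}_s$ has symbol $e^{-s\Psi}$. Conditioning on $\L_\lambda$ and using Fubini,
\begin{align*}
\hat u(\bm{\xi},t)=\hat h(\bm{\xi})\,\mathpzc{E}\, e^{-t\L_\lambda\Psi(\bm{\xi})}=\hat h(\bm{\xi})\,S_\lambda\bigl(t\Psi(\bm{\xi})\bigr).
\end{align*}
Here $S_\lambda$ is extended to $\{\mathrm{Re}\,z\ge0\}$ by the Laplace representation \eqref{mistura di esponenziali}.

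\textbf{Step 2: transform the operator.} By \eqref{G mu(-Gtau) parte dell'operatore}, the symbol of $-G\mu(-G\tau)$ is
\begin{align*}
\Psi\int_0^\infty e^{-\tau z\Psi}\kappa(dz)=\Psi\,\mu(\tau\Psi).
\end{align*}
Hence $\mathcal{D}^{\mu,-G}_t$ becomes, for each fixed $\bm{\xi}$, the scalar operator $\mathcal{D}^{\mu,a}_t$ of \eqref{non lo so} with $a=\Psi(\bm{\xi})$. The claim therefore reduces to
\begin{align*}
\mathcal{D}^{\mu,a}_t\,S_\lambda(at)=-\lambda S_\lambda(at),
\end{align*}
after which one multiplies by $\hat h$ and inverts the Fourier transform. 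The initial condition is immediate since $S_\lambda(0)=1$.

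\textbf{Step 3: the scaling identity.} For real $a>0$, substitute $\tau'=a\tau$ in \eqref{non lo so} to get
\begin{align*}
\mathcal{D}^{\mu,a}_t[g(a\,\cdot)](t)=(\mathcal{D}^\mu g)(at).
\end{align*}
Applying this with $g=S_\lambda$ and using \eqref{equazione autovalori} proves the reduced equation. For complex $a$ with $\mathrm{Re}\,a>0$, both sides are analytic in $a$; the $\tau$-integrals converge because $\mu(\tau a)=\int e^{-\tau a z}\kappa(dz)$ is dominated by its value at $\mathrm{Re}\,a$. The identity then extends by the identity theorem, and the case $\mathrm{Re}\,a=0$ follows by continuity. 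Alternatively, one can work directly with $\mathcal{D}^\mu$ written via Laplace transform in $t$: it has symbol $f(s)$ minus the $\overline{\mu}$ boundary term, and rescaling by $a$ is algebraic there.

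\textbf{Main obstacle.} The hard part is Step 3 for complex (and purely imaginary) $a=\Psi(\bm{\xi})$, together with justifying the exchange of $G$, the $\kappa$- and $\tau$-integrals, and Fourier inversion. The first thing to try is the Laplace transform in $t$. From \eqref{mistura di esponenziali} and Theorem~\ref{Orsingher Ricciuti Toaldo}, $\int_0^\infty e^{-st}S_\lambda(t)dt=f(s)/(s(f(s)+\lambda))$. The Laplace transform of $S_\lambda(at)$ is then $a^{-1}$ times this expression at $s/a$, which is valid for $\mathrm{Re}\,a>0$ by analytic continuation. The transform of $\mathcal{D}^{\mu,a}_t$ is $f(s/a)\hat g(s)-f(s/a)g(0)/s$, so the equation becomes an algebraic identity. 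Integrability in $\tau$ near $0$ uses that $|S_\lambda(at)-S_\lambda(a(t-\tau))|\le C|a|\tau$ for Schwartz data together with $\int(1\wedge\tau)\mu(d\tau)<\infty$. Fubini in $\bm{\xi}$ is justified by $|S_\lambda(t\Psi)|\le1$ and $\hat h\in\mathcal{S}$, with polynomial growth of $\Psi$.
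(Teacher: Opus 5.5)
Your proposal is correct and follows the paper's own argument. You pass to Fourier space to get $\hat u(\bm{\xi},t)=\hat h(\bm{\xi})\,S_\lambda\bigl(t\Psi(\bm{\xi})\bigr)$, rescale the eigenvalue equation \eqref{equazione autovalori} via $\tau=aw$ to obtain $\mathcal{D}^{\mu,a}_t S_\lambda(a\,\cdot)=-\lambda S_\lambda(a\,\cdot)$, set $a=\Psi(\bm{\xi})=-g(\bm{\xi})$, multiply by $\hat h$ and invert; your identity-theorem argument on $\{\mathrm{Re}\,a>0\}$ and the Laplace-transform cross-check supply the justification that the paper compresses into the phrase ``by making the complex extension''.
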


\begin{proof}
    Let us call $g(\cdot)$ the Lévy symbol of $M$; then the characteristic function of $M(t)$ reads
    \begin{align*}
        \E e^{i \langle \bm{\xi}, M(t)\rangle} = e^{t g(\bm{\xi})}, \qquad t\in\R_+, \quad \bm{\xi} \in \mathbb{R}^n.
    \end{align*}
    For $h\in \mathcal{S}\l\mathbb{R}^n \r,\ \bm{x}\in\R^n$, let $v(\bm{x},t) := \mathbb{E}h(\bm{x}+M(t))$ define the associated semigroup and  let $G$ be its infinitesimal generator.
    As $M$ is Lévy, one has (see Theorem 3.3.3 in \cite{applebaum2009levy} for details)
    \begin{align*}
        \hat{v}(\bm{\xi},t) =  \hat{h}\l\bm{\xi}\r e^{t g(\bm{\xi})}.
    \end{align*}
    By definition of $Y(t)$ and using a conditioning argument on the values of $\L_{\lambda}$, one obtains
    \begin{align}
        \hat{u}(\bm{\xi},t) =  \mathpzc{E} \hat{h}\l\bm{\xi}\r e^{t \L _\lambda g(\bm{\xi})} = \hat{h}\l\bm{\xi}\r S_{\lambda}\l -g\l\bm{\xi}\r t \r, \label{Teorema equazione: u tilde di xi e t}
    \end{align}
    where the last equality follows from $S_{\lambda}(t) = \mathpzc{E} e^{-\L_\lambda t}$ (see Proposition \ref{Proposition: uguaglianza in distribuzione del processo di conteggio}).
    \begin{comment}
    It follows from  
    \begin{align}
        \E e^{i\langle \bm{\xi}, M(\L t) \rangle} = \int_0^{\infty} e^{t g(\bm{\xi}) l} P(\L \in dl) = \E e^{tg(\bm{\xi})\, \L}= S(-g(\bm{\xi})t). \label{funzione caratteristica processo subordinato}
    \end{align}
    \end{comment}
   We now recall (see Equation \eqref{equazione autovalori}) that $S_{\lambda}(t)$ solves
    \begin{align*}
        \int_0^{\infty} \l S_{\lambda}(t) - S_{\lambda}(t-\tau) \r \mu(\tau) d\tau - \overline {\mu}(t) S_{\lambda}(0)= - \lambda S_{\lambda}(t), \qquad \forall t\in\R_+,
    \end{align*}
    where $S_{\lambda}(0)=1$.
    Given that it works for each $t$, it also holds for $at$, for each $a$, so that
    \begin{align*}
        \int_0^{\infty} \l S_{\lambda}(at) - S_{\lambda}(at-\tau) \r \mu(\tau) d\tau - \int _{at}^\infty \mu (\tau)d\tau = - \lambda S_{\lambda}(at) \qquad \forall t\in\R_+.
    \end{align*}
    By applying the change of variables $\tau  = aw$ in both integrals, we get
    \begin{align*}
        \int_0^{\infty} a\mu(aw) \, \l S_{\lambda}(at) - S_{\lambda}(a(t-w)) \r dw - a\int _t^\infty \mu (aw) dw = - \lambda S_{\lambda}(at) \qquad \forall t\in\R_+,
    \end{align*}
    namely $\mathcal{D}_t^{\mu,a} S_{\lambda}(at)=-\lambda S_{\lambda}(at)$, where the operator is defined in \eqref{non lo so}.
    By making the complex extension, we can apply this formula for $a = -g(\bm{\xi})$, to obtain
    \begin{align*}
        -\int_0^{\infty} g(\bm{\xi})\mu (-g(\bm{\xi})w)  \l  S_{\lambda}(-g(\bm{\xi})t)- S_{\lambda}(-g(\bm{\xi})(t-w)) \r \, dw & + g(\bm{\xi}) \int _t^\infty \mu (-g(\bm{\xi})w) dw \\ & = -\lambda  S_{\lambda}(-g(\bm{\xi})t).
    \end{align*}
    
    By multiplying both members by $\hat{h}\l\bm{\xi}\r$, applying the inverse Fourier transform, using Equation \eqref{Teorema equazione: u tilde di xi e t} and noting that $g(\bm{\xi})$ is the symbol of $G$, we have
    \begin{align*}
        -\int_0^{\infty} G \mu (-Gw)\, \l  u(x,t)- u(x, t-w) \r   dw + G \int _t^\infty \mu (-Gw) dw= -\lambda u(x,t).
    \end{align*}
    This concludes the proof.
\end{proof}

As a corollary, we present the result for non-Markovian chains with long-range dependence.

\begin{os}
    Consider a para-Markov chain $Y$ as in Definition \ref{Definizione definition di para markov}, with state space $\mathcal{E}$, and the notation $p_{ij}(t) := P(Y(t)=j|Y(0)=i),\ i,j\in\mathcal{E}$. As a consequence of Corollary \ref{Corollario: uguaglianza in distribuzione per le catene} and Theorem \ref{Theorem: Governing equation equazione governante}, the probability $p_{ij}(t)$ solves Equation \eqref{Governing equation equazione governante}. If $\mathcal{E}$ is finite, then $G$ is a matrix and then the operator \eqref{G mu(-Gtau) parte dell'operatore} reduces to $G\int_0^{\infty}e^{G\tau z} \kappa(dz)$. The latter case has been already studied in Paragraph 4 of \cite{facciaroni2025markov} when $S_\lambda (\cdot)$ is the Mittag-Leffler survival function. 
\end{os}

\begin{os}
    It is worth noting that in Equation \eqref{Governing equation equazione governante} the time and space operators cannot be separated, in general. It would be possible in the case of $H$ self-similar. It can be proved that the only self-similar subordinator is the $\alpha$-stable subordinator.  In this case the associated Bernstein function is $f(\bfn) = \bfn^{\alpha}$, the Lévy density is homogeneous of index $-(1+\alpha)$ and one gets
    \begin{align*}
        \l\frac{\partial}{\partial t} \r^{\al} u(x,t) = -(-G)^\al u(x,t), \qquad u(x,0)=h(x), \qquad  h\in \mathcal{S}\l\mathbb{R}^n \r.
    \end{align*}
    The latter is a special case of the abstract equation appeared in Theorem 4.1 in  \cite{facciaroni2025random}.
\end{os}

\subsection{Functional convergence to anomalous diffusion} \label{The case of the anomalous diffusion}

We now  introduce an anomalous diffusion process, which is related to para-Markov chains. This definition generalizes  the super-diffusion presented in \cite{facciaroni2025random}, the latter arising as hydrodynamic limit of a kinetic model.
In the second part of this subsection, we prove that this anomalous diffusion arises as a scaling limit of continuous-time random walks (CTRWs). The emergence of anomalous diffusion as a limit of CTRWs has been studied extensively within the framework of semi-Markov processes (see e.g. \cite{ScalasGorenfloMainardi2004,straka2011lagging, meerschaert2008triangular, Meerschaert2014}). In these works, the CTRW consists of i.i.d. spatial jumps separated by i.i.d. waiting times; in particular, the jump times are generated by a renewal process (see Remark \ref{Remark su Meerschaert} for details).
Here we go beyond this theory by assuming dependent waiting times between jumps, namely the counting process is of the type in Definition \ref{Definition of exchangeable fractional PP}.

For the sake of simplicity, we present the definition of the anomalous diffusion in the one dimensional case. We shall use the notation $a\wedge b$ for the minimum between $a,b\in\R$.

\begin{defin} \label{Definition: anomalous diffusion}
    Let us consider a stochastic process $Z = \lg Z(t),\ t\in\R_+ \rg$ on $\mathbb{R}$ and a survival function $S_{\lambda}:\R_+\longrightarrow[0,1]$ of the form \eqref{mistura di esponenziali}. $Z$ is said to be a para-Markov anomalous-diffusion process if, for any choice of times $0 \leq t_1 <\ldots < t_n$,  the vector $\l Z(t_1), \ldots, Z(t_n) \r$ has characteristic function
    \begin{align}
        \varphi_{n}(\bm{\xi}) = S_{\lambda} \l  \frac{1}{2} \langle \bm{\xi}, Q \bm{\xi} \rangle \r, \qquad \forall \bm{\xi}\in \R^{n},\ n\in\N_+, \label{funzione caratteristica finito dimensionali}
    \end{align}
    where the matrix $Q = \lq Q_{ij} \rq \in \R^{n\times n}$ has entries $Q_{ij} := t_i \wedge t_j,\ i,j \in \{1,\ldots, n\}$.
\end{defin}

As in the case of para-Markov chains (Definition \ref{Definizione definition di para markov}), parameters of the process will depend on the choice of $S_{\lambda}(\cdot)$. For $S_{\lambda}(t) = e^{-\lambda t}$, Definition \ref{Definition: anomalous diffusion} reduces to Brownian motion with var-cov matrix $\lambda Q$.
Otherwise, in many remarkable cases (see Theorem 3.2 in  \cite{orsingher2018semi}) the distribution $\nu _\lambda(\cdot)$ defined in \eqref{mistura di esponenziali} has infinite mean; hence, by using the relation between characteristic function and moments of the distribution, we can see that the variance of $Z(t)$ is infinite, so that $Z$ shows a superdiffusive behaviour. For example, in \cite{facciaroni2025random} the authors consider the case of $S_{1}(t) = E_{\alpha}\l- t^{\alpha}\r$ (see Example \ref{Esempio fractional PP, con distribuzione Mittag-Leffler});  the resulting process is superdiffusive and has a density solving the anomalous diffusion equation $\partial _t ^\alpha p(x,t)= -2^{-\alpha}(-\Delta)^\alpha p(x,t)$, with $\alpha \in (0,1)$, with the operator on the right side denoting the fractional Laplacian, under initial condition $p(x,0) = \delta(x)$.

\begin{prop} \label{uguaglianza fdd del mb lampertizzato}
    Let $B = \{B(t),\ t\in\R_+\}$ be a standard Brownian motion in $\mathbb{R}$ with var-cov matrix $Q = \lq Q_{ij} \rq$, where $Q_{ij} := t_i \wedge t_j,\ i,j \in \{1,\ldots, n\},\ n\in\N_+$.
    Then there exists a random variable $\L_{\lambda}$ independent of $B$ such that $ \{Z(t),\ t\in\R_+\} \overset{d}{=} \{B\l \L_{\lambda}t \r,\ t\in\R_+\}$.
\end{prop}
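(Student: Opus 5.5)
The plan is to mirror the proof of Proposition \ref{Proposition: uguaglianza in distribuzione del processo di conteggio}. We take $\L_{\lambda}$ with law $\nu_{\lambda}(\cdot)$ as in \eqref{mistura di esponenziali}, defined on a common probability space with $B$ and independent of it. We then show that the finite-dimensional characteristic functions of $\{B(\L_{\lambda}t),\ t\in\R_+\}$ coincide with \eqref{funzione caratteristica finito dimensionali}. Since characteristic functions determine finite-dimensional distributions, and the family \eqref{funzione caratteristica finito dimensionali} defines the finite-dimensional laws of $Z$, this gives the equality $\overset{d}{=}$ claimed in the statement.

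Fix $0\le t_1<\dots<t_n$ and $\bm{\xi}\in\R^n$, and condition on $\L_{\lambda}=l$. For fixed $l\ge 0$, the vector $(B(lt_1),\dots,B(lt_n))$ is centered Gaussian with covariance entries $lt_i\wedge lt_j = l\,Q_{ij}$. Its characteristic function is therefore $\exp\l -\tfrac{l}{2}\langle \bm{\xi},Q\bm{\xi}\rangle\r$. The case $l=0$ is harmless, since the vector is then identically zero and the formula still holds.

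By independence and Fubini (or the tower property), integrating against $\nu_{\lambda}$ gives
\begin{align*}
\E e^{i\sum_{k}\xi_k B(\L_{\lambda}t_k)} = \int_0^\infty e^{-\frac{l}{2}\langle \bm{\xi},Q\bm{\xi}\rangle}\,\nu_{\lambda}(dl) = \mathpzc{E}\, e^{-\L_{\lambda}\frac12\langle \bm{\xi},Q\bm{\xi}\rangle} = S_{\lambda}\l \tfrac12\langle \bm{\xi},Q\bm{\xi}\rangle\r .
\end{align*}
The last equality is \eqref{mistura di esponenziali} evaluated at the non-negative argument $\tfrac12\langle\bm{\xi},Q\bm{\xi}\rangle$. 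This argument is indeed non-negative because $Q$ is a covariance matrix, hence positive semidefinite.

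The only delicate point is logical rather than computational: Definition \ref{Definition: anomalous diffusion} presupposes that the family \eqref{funzione caratteristica finito dimensionali} consists of genuine, consistent characteristic functions. The construction $B(\L_{\lambda}t)$ settles this, since it exhibits an actual process realizing these finite-dimensional laws, so they are automatically characteristic functions and Kolmogorov-consistent. Once that is noted, the identification of the two processes' finite-dimensional distributions follows from the computation above and the uniqueness theorem for characteristic functions. I expect no real obstacle beyond justifying the conditioning step, which is immediate from independence and the boundedness of the integrand.
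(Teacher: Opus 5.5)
Your proposal is correct and follows the paper's proof essentially step for step. You take $\L_{\lambda}$ with law $\nu_{\lambda}$ independent of $B$, condition on $\L_{\lambda}=l$ to get a centered Gaussian vector with covariance $lQ$, and integrate against $\nu_{\lambda}$ to obtain $S_{\lambda}(\frac{1}{2}\langle \bm{\xi}, Q\bm{\xi}\rangle)$. Your extra remarks are correct refinements that the paper leaves implicit: the non-negativity of $\langle \bm{\xi}, Q\bm{\xi}\rangle$, the harmless case $l=0$, and the observation that this construction also certifies that the family in Definition~\ref{Definition: anomalous diffusion} is consistent, so that $Z$ exists.
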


\begin{proof}
    From Formula \eqref{mistura di esponenziali}, the survival function $S_{\lambda}(\cdot)$ is completely monotone, then there exists a probability measure $\nu_{\lambda}(\cdot)$ such that
    \begin{align*}
        S_{\lambda}(t) = \int_0^{\infty}e^{-st} \nu_{\lambda}(ds), \qquad \forall t\in\R_+.
    \end{align*}
    Let us call $\L_{\lambda}$ a random variable whose distribution is $\nu_{\lambda}(\cdot)$.
    Let us consider a sequence of times $0 \leq t_1 <\ldots < t_n, \ n\in\N_+$. Consider the vector $\widetilde{\Gamma} := (B(\L_{\lambda} t_1),\ldots, B(\L_{\lambda} t_n))$, on some probability space, with random var-cov matrix given by $\L_{\lambda} Q$. Then, using $\E$ for the expectation on this probability space, one can write
    \begin{align*}
        \E \lq e^{i \langle \bm{\xi}, \widetilde{\Gamma}\rangle} \rq 
        % &= \E\lq e^{-\frac{1}{2} \langle \bm{\xi}, \opmoddue \bm{\xi} \rangle} \rq \\
        &= \mathpzc{E}\lq e^{-\frac{1}{2} \langle \bm{\xi}, \L_{\lambda} Q \bm{\xi} \rangle} \rq \\
        %&= \int_0^{\infty} e^{-\frac{1}{2} \langle \bm{\xi}, l Q \bm{\xi} \rangle} \, P\lq \L_{\lambda}\in dl \rq\\
        %&= \int_0^{\infty}e^{-\frac{1}{2} \langle \bm{\xi}, Q \bm{\xi} \rangle l} \, P\lq \L_{\lambda}\in dl \rq \\
        &= \mathpzc{E}\lq e^{-\frac{1}{2}\langle \bm{\xi}, Q \bm{\xi} \rangle \L_{\lambda}} \rq = S_{\lambda} \l \frac{1}{2}\langle \bm{\xi}, Q \bm{\xi} \rangle \r \qquad \forall \bm{\xi}\in \R^{n},
    \end{align*}
    where the matrix $Q \in\R^{n\times n}$ has the form of the one in Definition \ref{Definition: anomalous diffusion}.
\end{proof}

The above proposition shows that our anomalous diffusion has some relation with the models presented in \cite{di2019gaussian}.

\begin{cor}
     Consider the process $Z$ as in Definition \ref{Definition: anomalous diffusion} and assume $Q$ is positive definite. Then the vector $\l Z(t_1), \ldots, Z(t_n) \r$ has density given by
    \begin{align*}
        f_n(\bm{\xi}) &= \int_0^{\infty} \frac{1}{\l 2\pi\, s^{n} \det{Q}\r^{n/2}} e^{-\frac{1}{2s}\langle \bm{\xi},Q^{-1}\bm{\xi}\rangle}\nu_{\lambda}(ds), \qquad \bm{\xi}\in\R^n.
    \end{align*}
    Moreover, $Z(t)$ has a density solving
    \begin{align*}
        \mathcal{D}_t ^{\mu, -\Delta} p(x,t)= -\lambda p(x,t), \qquad p(x,0) = \delta(x).
    \end{align*}
\end{cor}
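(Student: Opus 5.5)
The plan is to obtain both claims from the mixture representation of Proposition \ref{uguaglianza fdd del mb lampertizzato}, namely $Z \overset{d}{=} B(\L_\lambda \cdot)$ with $\L_\lambda \sim \nu_\lambda$ independent of $B$. For the density, I condition on $\L_\lambda = s$. Given this, $(Z(t_1),\ldots,Z(t_n))$ is a centred Gaussian vector with covariance $sQ$, and $sQ$ is positive definite whenever $s>0$ and $Q$ is positive definite. Its conditional density is the standard Gaussian density
\[
\frac{1}{(2\pi)^{n/2}(\det(sQ))^{1/2}}\, e^{-\frac{1}{2s}\langle \bm{\xi},Q^{-1}\bm{\xi}\rangle}.
\]
Integrating against $\nu_\lambda(ds)$ then gives the density by Tonelli, since the integrand is non-negative. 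The normalizing constant should be checked against the one printed in the statement, which seems to contain a typo in the exponent of $\det$.

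There is a measure-theoretic point to handle: if $\nu_\lambda$ charges $\{0\}$, the law has an atom at the origin. In that case I would either note that $\nu_\lambda(\{0\})=\lim_{t\to\infty}S_\lambda(t)=0$ because $S_\lambda$ is a survival function of a finite waiting time, or assume it. As a consistency check, taking the Fourier transform of the mixture gives back $\mathpzc{E}\, e^{-\frac12\langle\bm{\xi},\L_\lambda Q\bm{\xi}\rangle}=S_\lambda(\frac12\langle \bm{\xi},Q\bm{\xi}\rangle)$, which matches Definition \ref{Definition: anomalous diffusion}.

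For the governing equation, I apply Theorem \ref{Theorem: Governing equation equazione governante} with $M=B$, a Lévy process on $\R$. Its Lévy symbol is $g(\xi)=-\xi^2/2$ and its generator is $G=\frac12\Delta$. Since $Z(t)\overset{d}{=}B(\L_\lambda t)$ for each fixed $t$, the theorem gives, for every $h\in\mathcal{S}(\R)$, that $u(x,t)=\E h(x+Z(t))$ solves $\mathcal{D}^{\mu,-G}_t u=-\lambda u$ with $u(x,0)=h(x)$. Writing $u(x,t)=\int h(x+y)p(y,t)\,dy$ and letting $h$ approximate $\delta$ (equivalently, using the symmetry $p(y,t)=p(-y,t)$ and passing to the dual), the equation transfers to the density $p$, with initial condition $\delta(x)$.

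The cleanest way to do this is directly in Fourier variables. Here $\hat p(\xi,t)=S_\lambda(\xi^2 t/2)$, and the scaled eigen-equation $\mathcal{D}^{\mu,a}_tS_\lambda(at)=-\lambda S_\lambda(at)$ from the proof of the theorem, taken with $a=\xi^2/2\ge 0$, gives the symbol identity without any complex extension. Inverting the Fourier transform yields the claim with operator $\mathcal{D}^{\mu,-G}_t$, $G=\frac12\Delta$. The statement's $\mathcal{D}^{\mu,-\Delta}_t$ I would read either with this normalization convention or as the generator of $B$.

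The main obstacle is justifying the interchange of the inverse Fourier transform with the time integrals in $\mathcal{D}_t^{\mu,-G}$ when acting on a density rather than a Schwartz function. I would first handle this at the level of test functions $h$, where Theorem \ref{Theorem: Governing equation equazione governante} already applies, and then read off the equation in the weak or distributional sense for $p$. This is the form in which the statement should be understood.
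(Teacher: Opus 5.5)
Your proposal is correct and follows the paper's own (very brief) argument: the density is the $\nu_\lambda$-mixture of centred Gaussians with covariance $sQ$ coming from Proposition \ref{uguaglianza fdd del mb lampertizzato}, and the equation follows from Theorem \ref{Theorem: Governing equation equazione governante} applied with $M=B$. Your corrections are also right: the normalizing factor should be $(2\pi s)^{n/2}(\det Q)^{1/2}$ rather than the printed $(2\pi s^n\det Q)^{n/2}$, and since $\hat p(\xi,t)=S_\lambda(\xi^2 t/2)$ the generator is $G=\frac12\Delta$, not the $G=\Delta$ the paper sets; your remarks that $\nu_\lambda(\{0\})=0$ and that the equation for $p$ should be read distributionally are sensible additions the paper omits.
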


\begin{proof}
    The form of the density directly follows from Proposition \ref{uguaglianza fdd del mb lampertizzato}. The governing equation of the density of $Z(t)$ follows from Proposition \ref{uguaglianza fdd del mb lampertizzato} and Theorem \ref{Theorem: Governing equation equazione governante} setting $G = \Delta$.
\end{proof}

%\subsection{Convergence of the Continuous-Time Random Walk} \label{Convergence of the Compuond Poisson}

We now aim to find a CTRW approximation of the anomalous diffusion in Definition \ref{Definition: anomalous diffusion}.
Let $\{X_k,\ k\in\N_+\}$ be i.i.d.~random variables on $\R$ with $\E X_1=0, \ \E X_1^2  = 1$. The results can be easily generalized to the case of $\E X_1 \neq 0$ and $\E  \l X_1 - \E X_1 \r^2  = \si^2 < \infty$.
Let $\mathcal{N}$ be the counting process as in Definition \ref{Definition of exchangeable fractional PP} and assume $\mathcal{N}$ is independent of any $X_k,\ k\in\N_+$. 
Let us define the process $Y = \{Y(t),\ t\in\R_+\}$ where

\begin{align}
    Y(t)
    := \sum_{k=1}^{\mathcal{N}( t)} X_k, 
    \qquad t\in\R_+. \label{CTRW continuous-time random walk}
\end{align}
We shall use the notation $\implies$ to mean $J_1$-functional convergence in the Skorokhod topology in the space $D\l [0,\infty) \r$ of càdlàg functions on the real positive half line. For a light introduction to Skorokhod topologies we suggest to consult \cite{kern2022skorohod}.

\begin{te} \label{Theorem teorema : approssimazione diffusione anomala}
    Let $Y$ be defined as above and let us consider the rescaled process $Y_n := \{Y_n(t),\ t\in\R_+\}$, where
    \begin{align*}
        Y_{n}(t) := \frac{1}{\sqrt{n}}\sum_{k=1}^{\mathcal{N}(n t)} X_k, 
    \qquad t\in\R_+,
    \end{align*}
    for $n\in\N_+$. Then, in the limit $n\to +\infty$, one has
    \begin{align*}
    Y_{n}  \Longrightarrow  Z,
    \end{align*}
    where $Z$ is the anomalous diffusion of Definition \ref{Definition: anomalous diffusion}.
\end{te}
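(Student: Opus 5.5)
The plan is to exploit the mixture representation of Proposition \ref{Proposition: uguaglianza in distribuzione del processo di conteggio}. The aim is to reduce the statement to Donsker's theorem plus a continuous-mapping argument, applied conditionally on the mixing variable $\L_\lambda$.

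\textbf{Step 1 (realizing the counting process).} Since the waiting times of $\mathcal{N}$ have the Schur-constant law \eqref{Definizione di new fractional: congiunta degli intertempi}, the proof of Proposition \ref{Proposition: uguaglianza in distribuzione del processo di conteggio} shows that the whole sequence $\{J_k\}$ has the same law as $\{W_k/\L_\lambda\}$. Here the $W_k$ are i.i.d.\ exponential of rate $1$ and $\L_\lambda\sim\nu_\lambda$ is independent of them. Hence $\mathcal{N}$ equals $\{N(t\L_\lambda)\}$ in law as a process, not only in finite-dimensional distributions. Since $J_1$-convergence is a statement about laws on $D([0,\infty))$, we may work on a probability space carrying $N$, $\L_\lambda$ and $\{X_k\}$, all independent, and set
\[
Y_n(t)=\frac{1}{\sqrt n}\sum_{k=1}^{N(nt\L_\lambda)}X_k .
\]

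\textbf{Step 2 (conditional convergence).} Fix $l>0$ and write $Y_n^{l}$ for the process with $\L_\lambda$ replaced by $l$. This is a classical compound Poisson CTRW. By Donsker's theorem, $S_n(t):=n^{-1/2}\sum_{k\le \lfloor nt\rfloor}X_k\Rightarrow B$. By the functional law of large numbers for the Poisson process, $N(nlt)/n\to lt$ uniformly on compacts almost surely. The two components are independent and the second limit is deterministic, so the pair converges jointly. The composition map $(x,y)\mapsto x\circ y$ is continuous at pairs $(x,y)$ with $x$ continuous and $y$ continuous nondecreasing (Whitt's theorem on composition). Therefore
\[
Y_n^{l}=S_n\bigl(N(nl\,\cdot)/n\bigr)\Longrightarrow B(l\,\cdot).
\]
The small discrepancy between $\lfloor\cdot\rfloor$ and the integer-valued time argument is harmless.

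\textbf{Step 3 (mixing).} For bounded $J_1$-continuous $F$, independence and conditioning give
\[
\E F(Y_n)=\int_0^\infty \E F(Y_n^{l})\,\nu_\lambda(dl).
\]
By Step 2 the integrand converges to $\E F(B(l\,\cdot))$ for each $l>0$, and it is bounded by $\|F\|_\infty$. Dominated convergence then yields convergence to $\E F(B(\L_\lambda\,\cdot))$. The atom $l=0$, if any, is trivial because there $Y_n\equiv 0=B(0\cdot)$. Finally, by Proposition \ref{uguaglianza fdd del mb lampertizzato}, $B(\L_\lambda\cdot)$ has the finite-dimensional distributions of $Z$ and continuous paths, so it is a version of $Z$ in $D([0,\infty))$. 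This gives $Y_n\Rightarrow Z$.

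\textbf{Main obstacle.} The delicate point is Step 1. One must justify that it is legitimate to replace $\mathcal{N}$ by the path-level construction $N(\L_\lambda t)$ when the paper only states equality of finite-dimensional distributions. The route is to note that the path of a counting process is a measurable function of its waiting-time sequence, and the waiting-time sequence has the same law in both cases because Kolmogorov consistency is determined by \eqref{Definizione di new fractional: congiunta degli intertempi}. A secondary technical check is that the continuity-of-composition theorem applies on $D([0,\infty))$ rather than on $D([0,T])$. To handle this, I would prove convergence on each $[0,T]$ and use that the limit has continuous paths, so every $T$ is a continuity point.
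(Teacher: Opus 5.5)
Your proof is correct, and it takes a different route from the paper's. The paper splits the claim into two parts:
\begin{itemize}
\item Convergence of finite-dimensional distributions, obtained by conditioning on $\L_\lambda$, applying the CLT for the compound Poisson process, and using dominated convergence.
\item Tightness, via Aldous' criterion. It checks $\lim_{\gamma\to\infty}\sup_n P(|Y_n(t)|>\gamma)=0$ and the stopping-time condition using conditional Chebyshev bounds $lt/\gamma^2$ and $l\zeta_n/\gamma^2$, after truncating on $\{\L_\lambda\le K\}$.
\end{itemize}

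You instead lift the conditioning to the path level. For fixed $l$ you obtain the full functional limit $Y_n^l\Rightarrow B(l\,\cdot)$ from Donsker plus continuity of composition at $(B,l\,\cdot)$. You then mix by bounded convergence on $\E F(Y_n^l)$ for bounded continuous $F$.

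What your route buys:
\begin{itemize}
\item Tightness never has to be checked, because a $\nu_\lambda$-mixture of weakly convergent laws converges weakly.
\item It avoids the delicate point in the paper's Aldous step. There one conditions on $\L_\lambda$ while $\tau_n$ is a stopping time for the filtration of $Y_n$, and then uses the Poisson law of the increment after $\tau_n$.
\item Your Step 1 makes explicit the joint realization of $\mathcal{N}$ with $\L_\lambda$, which the paper's conditioning uses tacitly.
\end{itemize}

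That step is less of an obstacle than you fear. Equality of finite-dimensional distributions of two c\`adl\`ag processes already gives equality of laws on $D([0,\infty))$. Together with the independence of the $X_k$, this fixes the law of $Y_n$.

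The identity $Y_n^l=S_n\bigl(N(nl\,\cdot)/n\bigr)$ is exact, since $N(nlt)$ is an integer. So there is no floor discrepancy to handle.

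The price of your route is reliance on the composition-continuity (random time change) theorem. The paper's argument is more elementary, using only second moments and explicit estimates.
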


\begin{proof}
    To derive the thesis, it is sufficient to prove the convergence of finite-dimensional distributions and the tightness of the sequence of processes $\{Y_n\}$ (see Theorem 13.1 in \cite{billingsley2013convergence}). Before doing it, we observe that from Proposition \ref{Proposition: uguaglianza in distribuzione del processo di conteggio}, there exists a random variable $\L_{\lambda}$ such that $\mathcal{N}\l t\r \overset{d}{=}N(t\L_{\lambda}) $ for each $t\in\R_+$, being $N = \{N(t),\ t\in\R_+\}$ the Poisson process of intensity $1$. We call $\nu_{\lambda}(\cdot)$ the distribution of $\L_{\lambda}$.

    \begin{enumerate}
        \item We first prove that the finite dimensional distributions of $Y_n$ converge to those of $Z$. To this aim, we can study the convergence of the characteristic function of the random vector $\Gamma_n^k := \l Y_{n}(t_1), \ldots, Y_{n}(t_k) \r$ where $t_j \in \R_+,\ j = 1,\ldots, k$ and $k\in\N_+$. By using a conditioning argument and dominated convergence (since $\nu_{\lambda}(dl)$ integrates to 1), one has
        \begin{align*}
            \E e^{i \langle \bm{\xi}, \Gamma_n^k \rangle} &= \int_0^{\infty} \E \lq e^{i \langle \bm{\xi}, \Gamma_n^k \rangle} \mid \L_{\lambda}= l\rq \nu_{\lambda}(dl)\\
            &\overset{n\to\infty}{\longrightarrow} \int_0^{\infty} e^{-\frac{1}{2}  l \langle \bm{\xi}, Q \bm{\xi} \rangle} \nu_{\lambda}(dl) = S_{\lambda}  \l \frac{1}{2}   \langle \bm{\xi}, Q \bm{\xi} \rangle \r, \qquad \forall \bm{\xi} \in \R^n,
        \end{align*}
        for each $k\in\N_+,\ t_j \in \R_+,\ j = 1,\ldots, k$,
        where the matrix $Q = \lq Q_{ij} \rq \in \R^{n\times n}$ has entries $Q_{ij} := t_i \wedge t_j,\ i,j \in \{1,\ldots, n\}$.
        In the last step we used the fact that the characteristic function of the compound Poisson process converges to the one of Brownian motion due to Donsker's theorem.
        \item We now prove the tightness of the sequence $\{Y_n\}$. To this aim, one can use the sufficient condition proved by Aldous (see \cite{aldous1978stopping} or Theorem 16.10 in \cite{billingsley2013convergence}). Namely, we have to verify that both (16.22) and \textit{Condition 1°} in Paragraph 16 of \cite{billingsley2013convergence} hold.
        \begin{enumerate}
            \item To prove (16.22) it suffices to verify that for all $t\in\R_+$
            \begin{align}
                \lim_{\gamma \to \infty}\sup_{n\in\N_+} P(|Y_n(t)|> \gamma) = 0, \label{Proof teo convergenza parte 2 punto a) - eq da provare}
            \end{align}
            which prevents the escape of mass to infinity.
           For any $K > 0$ one has
            \begin{align}
                P(|Y_n(t)|> \gamma)
                %&= \E 1_{\{|Y_n(t)|> \gamma\}} 
                %= \E \lq (1_{\{K \leq \L\}} + 1_{\{K > \L\}})  \E \lq 1_{\{|Y_n(t)|> \gamma\}} \mid \L_{\lambda}\rq \rq \\
                &= \int_0^K  P(|Y_n(t)| > \gamma \mid \L_{\lambda}= l)\nu_{\lambda}(dl) \nonumber \\
                &+ \int_K^{\infty} P(|Y_n(t)| > \gamma \mid \L_{\lambda}= l) \nu_{\lambda}(dl). 
                %&=  \E \lq 1_{\{K \leq \L\}}  P(|Y_n(t)|> \gamma \mid \L) \rq +  \E \lq 1_{\{K > \L\}}  P(|Y_n(t)|> \gamma \mid \L) \rq 
                \label{Proof teo convergenza parte 2 punto a) - funzionale}
            \end{align}
            Given that $P(|Y_n(t)|> \gamma \mid \L_{\lambda}= l) \leq 1$ the following inequality holds
            \begin{align*}
                P(|Y_n(t)|> \gamma) \leq    \int_0^K P\l |Y_n(t)|> \gamma \mid \L_{\lambda}= l\r\nu_{\lambda}(dl)   + P(\L_{\lambda}> K).
            \end{align*}
            By simple calculations\footnote{Let us consider a sequence $\{X_k,\ k\in\N_+\}$ of i.i.d. random variables with zero mean and $\mathrm{Var}(X_1) < \infty$. Let us introduce the random variable $N \in \N$ with $\E N < \infty$ and $N$ independent of $X_k$, for each $k\in\N_+$. Then $\mathrm{Var} \lq  \sum_{k = 1}^N X_k \rq = \mathrm{Var}[X_1] \E N$.}, we have 
            \begin{align*}
                \mathrm{Var}(Y_n(t) \mid \L_{\lambda}= l) = \frac{ 1 }{n} \E \lq \mathcal{N}(nt) \mid \L_{\lambda}= l \rq =   l t, 
            \end{align*}
            so that we can use the conditional Chebyshev inequality to get 
            \begin{align*}
                P(|Y_n(t)|> \gamma \mid \L_{\lambda} = l) \leq  \frac{  l  t}{\gamma^2}. 
            \end{align*}
            In conclusion, from Equation \eqref{Proof teo convergenza parte 2 punto a) - funzionale}, 
            \begin{align*}
                P(|Y_n(t)|> \gamma) \leq \frac{t}{\gamma^2} \int_0^K l \nu_{\lambda}(dl) + P(\L_{\lambda}> K).
            \end{align*}
            The right hand side does not depend on $n$, therefore 
            \begin{align*}
                \sup_{n\in\N_+} P(|Y_n(t)|> \gamma) \leq \frac{t}{\gamma^2} \int_0^K l \nu_{\lambda}(dl) + P(\L_{\lambda}> K) .
            \end{align*}
            The inequality holds for each $K > 0$. Passing to the limit $\gamma\to \infty$ and then $K \to \infty$, one gets Equation \eqref{Proof teo convergenza parte 2 punto a) - eq da provare} as required.
            \begin{comment}
            Then, for each $\ve > 0$ we can choose $\gamma$ and $K$ such that  $\ve - P\l\L_{\lambda} > K\r > 0$ and
            \begin{align*}
                \ve = \frac{t}{\gamma^2} \int_0^K l \nu_{\lambda}(dl) + P(\L_{\lambda}> K),
            \end{align*}
            to have 
            \begin{align*}
                \sup_{n\in\N_+} P(|Y_n(t)|> \gamma) \leq \ve.
            \end{align*}
            \end{comment}
            \item Let $\tau  = \{\tau_n,\ n\in\N_+\}$ be a sequence of stopping times almost surely finite. Let us consider the increments
            \begin{align*}
                \Delta_n := Y_n(\tau_n + \zeta_n) - Y_n(\tau_n) = \frac{1}{\sqrt{n}} \sum_{k = \mathcal{N}(n\tau_n) + 1}^{\mathcal{N}(n(\tau_n+ \zeta_n))} X_k,
            \end{align*}
            where $\zeta_n \to 0$ for $n\to\infty$.
            \textit{Condition 1°} in Paragraph 16 in \cite{billingsley2013convergence}, which corresponds to Condition A in \cite{aldous1978stopping}, requires to prove
            \begin{align*}
                \lim_{n \to \infty} P(|\Delta_n | > \gamma) = 0, \qquad \text{for each} \ \gamma > 0,
            \end{align*}
            which ensures that, at any random time, arbitrarily small time increments cannot produce large jumps of the limiting process. The jumps $X_k$s are i.i.d. so that $\Delta_n$ has the same distribution as $\sum_{k = 1}^{\mathcal{N}(n\l\tau_n + \zeta_n\r) - \mathcal{N}(n\tau_n)}X_k$.
            By stationarity of increments, we observe that, conditionally to  $\L_{\lambda}= l$, the random variable $\mathcal{N}(n(\tau_n+\zeta_n))-\mathcal{N}(n\tau_n)$ follows a Poisson distribution of intensity $n l\zeta_n$.
            Hence 
            \begin{align*}
                \mathrm{Var}(\Delta_n\mid\L_{\lambda}=l)
            =  l \zeta_n.
            \end{align*}
             By using the conditional Chebyshev inequality, one has
            \begin{align*}
            P\l |\Delta_n|>\gamma\mid\L_{\lambda}=l \r
            \le \frac{ l\zeta_n}{\gamma^2}.
            \end{align*}
            Proceeding similarly to point (a), for each $K > 0$ the following holds
            \begin{align*}
                P(|\Delta_n|>\gamma)
                \leq \frac{\zeta_n}{\gamma^2} \int_0^K l \nu_{\lambda}(dl)
                + P(\L_{\lambda}>K).
            \end{align*}
            Passing to the limit $n\to\infty$ and then $K \to \infty$, one gets the thesis. 
        \end{enumerate}
    \end{enumerate}
\end{proof}

\begin{os} \label{Remark su Meerschaert}
    As mentioned at the beginning of this subsection, convergence of CTRWs to anomalous diffusion has been widely studied in the semi-Markov setting. In the latter case, the position of the problem is the same as in the present paper, except for the fact that the counting process $\mathcal{N}$ in \eqref{CTRW continuous-time random walk} is of renewal type. The limiting anomalous diffusion is the time-changed process $B(\Inv(t))$, where $\Inv$ is the inverse of a subordinator; the density $p(x,t)$ of this process is governed by the non-local equation 
    \begin{align*}
        \mathcal{D}_t^{\mu}p(x,t) = \frac{1}{2}\Delta p(x,t), \qquad p(x,0) = \delta(x).
    \end{align*}
    In the special case where the waiting times of $\mathcal{N}$ are in the basin of attraction of a $\alpha$-stable law, with $\alpha \in (0,1)$, then $\Inv(t)$ is an inverse $\alpha$-stable subordinator, with $\alpha \in (0,1)$. Here  $\{B(\Inv(t)),\ t\in\R_+\}$ is a sub-diffusion, with mean square displacement $Ct^\alpha$; the process is governed by the fractional heat equation
    \begin{align*}
        \l \frac{\partial }{\partial t} \r^{\alpha} p(x,t) = \frac{1}{2}\Delta p(x,t), \qquad p(x,0) = \delta(x).
    \end{align*}
\end{os}

\section{Beyond semi-Markov and para-Markov chains} \label{Section: schema concentrico}

% \textcolor{red}{Consider $\tilde{M}, M, \si, \Inv, Y$ come in Section \ref{}}. Oppure

As recalled in the previous section, an important class of semi-Markov chains is given by the time change of Markovian ones with the inverse of Lévy subordinators. Moreover,
in Corollary \ref{Corollario: uguaglianza in distribuzione per le catene} we have seen that, in the sense of finite-dimensional distributions, para-Markov chains are equal to a random linear time change of Markov ones.
Here we propose to frame these relevant facts in a more general framework.

We thus consider a broad class of non-Markovian chains whose long-memory behaviour arises from dependence among waiting times. These processes are defined by time-changing a continuous-time Markov chain with the inverse of a suitable increasing process $\sigma$. The knowledge of the finite-dimensional distributions of $\sigma$ determines the joint distribution of the waiting times. We first show that, besides semi-Markov and para-Markov chains, also several well-known models from the literature (such as those with Schur-constant or exchangeable waiting times) fall within this theory (see Subsection \ref{Models  with dependent waiting times}). Then, in Subsection \ref{Section o subsection Stable stabili}, we introduce  new models obtained by assuming that $\sigma$ is an increasing stable process. 
%Subsection \ref{Self-similar stable processes} is devoted to particular $\si$ belonging to subclasses of stable processes.

\subsection{Chains with dependent waiting times} \label{Models  with dependent waiting times}

Consider a continuous-time Markov chain $M$ as defined by  (\ref{continuous-time chain M(t)}), with embedded chain $\tilde{M}$ and i.i.d. exponential waiting times $\{W_k,\ k\in\N_+\}$.
Consider a right-continuous, increasing process $\sigma = \{\sigma(t),\, t \in\R_+\}$, independent of $M$, with no deterministic points of discontinuity and with $\sigma (0)=0$ almost surely. Let $\Inv = \{\Inv(t),\, t \in\R_+\}$ denote the inverse of $\si$, defined by
    \begin{align*}
        \Inv(t) := \inf\lg s\in\R_+ \mid \sigma(s) > t\rg, \qquad t\in\R_+.
    \end{align*}
    
We want to analyse the properties of the process $Y = \{Y(t),\ t\in\R_+\}$, where $Y(t) := M(\Inv(t))$ for each $t\in\R_+$.
Let us call $\bm{J}_n:=(J_1,\ldots, J_n),\ n\in\N_+$ the first $n$ waiting times of $Y$.
\begin{comment}
\begin{prop} \label{time changee}
    Let $M$, $\sigma$, $\psi$ and $Y$ be defined as above. Assume that  $\sigma (0)=0$ almost surely. Then $Y$ has the same embedded chain $\tilde{M}$ of $M$, namely
    
        \begin{align*}
        Y(t) = \tilde{M}(n), \qquad \sum _{i=1}^n J_i \leq t<
\sum _{i=1}^{n+1}J_i    \end{align*}
   
    with waiting times satisfying
    \begin{align*}
        J_n = \si \l T_{n+1} \r - \si \l T_n^- \r.
    \end{align*}
\end{prop}

\begin{proof}
    Since $\sigma$ is a.s. increasing, then $\psi$ has continuous sample paths. This implies that $M(\psi (t))$ has the same embedded chain of $M$. Far vedere il time change ecc ecc.
\end{proof}

A special case is given by the following immediate corollary
\begin{cor} \label{coroll}
If $\sigma$ has stationary increments, then the waiting times $J_n$ all have the same distribution, namely $J_1 \overset{d}{=}\sigma (W_1)$
\end{cor}
\begin{proof}
    Mettere un paio di righe di proof.
\end{proof}

\end{comment}
The time-changed process $Y$ has the same embedded chain $\tilde{M}$ of $M$, namely
    \begin{align*}
    Y(t) = \tilde{M}(n), \qquad \sum _{k=1}^n J_k \leq t<
    \sum _{k=1}^{n+1}J_k  , \qquad n\in\N_+. \end{align*}
    This follows directly from the fact that $\sigma$ is strictly increasing and  $\psi$ has continuous sample paths.
    
    We claim that, if the finite-dimensional distributions of $\sigma$ are known, then the process $Y$ is analytically tractable and it is possible to find the joint distribution of waiting times.
    
    For $n\in\N_+$, we shall use the notation $ A_n := I_n - K_n $ where $I_n$ is the identity matrix of dimension $n$, $K_n$ is the lower shift matrix of dimension $n$, i.e.~ $(K_n)_{ij}$ equals 1 for $i = j+1$ and 0 otherwise. Moreover, $B_n$ shall indicate an $n$-dimensional lower triangular matrix with entries on and below the main diagonal equal to 1. Finally, $ \bm{\si}(\bm{t}_n) :=  \l \sigma(t_1),\ldots, \sigma(t_n) \r^T$, where $\bm{t}_n := \l t_1, \dots, t_n \r$.

\begin{te} \label{distribuzione intertempi}
    Let $Y$ be defined as above.
    % Let $M = \{M(t),\ t\in\R_+\}$ be a continuous-time Markov chain, having i.i.d. exponential waiting times $\{W_k,\ k\in\N_+\}$ of mean $\lambda ^{-1}$. Assume $\si$ as before. Let us consider $Y = \{Y(t),\ t\in\R_+\}$ where $Y(t) := M(\Inv(t))$ for each $t\in\R_+$ and $\Inv = \{\Inv(t),\ t\in\R_+\}$ is the inverse process of $\si$. Let us denote by $\{J_k,\ k\in\N_+\}$ the waiting times of $Y$.
    Then, the waiting times are given by
    \begin{align}
            \lg J_n,\ n\in\N_+\rg = \lg \si \l T_{n+1} \r - \si \l T_n^- \r, \ n\in\N_+ \rg, \label{forma del n waiting time di Y(t) = M(psi(t))}
        \end{align}
        for $T_n := \sum_{k = 1}^n W_k$.
    Moreover, for each $n\in\N_+$, the random vector $\bm{J}_n$ has distribution
        \begin{align*}
        \E e^{i \langle \bm{\xi}, \bm{J}_n \rangle} = \lambda^n\int_{\R^n_+} \vp_{\bm{\si}(B_n \bm{w}_n)} \l A_n^T \bm{\xi} \r  e^{-\lambda \sum_{j = 1}^nw_j} dw_1 \ldots dw_n, \qquad \bm{\xi} \in\R^n,
    \end{align*}
    where $\bm{w}_n := \l w_1,\ldots, w_n\r^T$ and $\vp_{\bm{\si}(\bm{t}_n)}(\cdot)$ is the characteristic function of the vector $\bm{\si}(\bm{t}_n)$.
\end{te}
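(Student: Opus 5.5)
The plan has two parts: first identify the jump epochs of $Y$ pathwise, then compute the characteristic function by conditioning on the exponential waiting times of $M$. For the first part, let $T_n=\sum_{k=1}^n W_k$ be the jump epochs of $M$. The process $Y(t)=M(\Inv(t))$ jumps exactly when $\Inv(t)$ crosses a level $T_n$. Since $\si$ is increasing and right-continuous, $\Inv(t)\ge T_n$ holds if and only if $t\ge \si(T_n^-)$. On $[\si(T_n^-),\si(T_n))$ the inverse $\Inv$ stays flat at $T_n$. Hence the $n$-th jump of $Y$ occurs at time $\si(T_n^-)$, which gives the representation \eqref{Preliminaries: generica forma del n+1 waiting time dopo il time change}: $J_{n+1}=\si(T_{n+1})-\si(T_n^-)$ (with $T_0=0$), which is \eqref{forma del n waiting time di Y(t) = M(psi(t))} up to the indexing convention. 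This is the same argument already used in the Introduction.

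Next I would remove the left limits. Because $\si$ has no deterministic points of discontinuity and is independent of $(T_n)$, which has absolutely continuous law, conditioning on $T_n=s$ gives $P(\si(s^-)\ne\si(s))=0$ for each fixed $s$. Integrating over $s$ yields $\si(T_n^-)=\si(T_n)$ a.s. Consequently, almost surely
\begin{align*}
\bm{J}_n=\l \si(T_1)-\si(T_0),\ldots,\si(T_n)-\si(T_{n-1})\r^T = A_n\,\bm{\si}(\bm{T}_n),\qquad \bm{T}_n = B_n \bm{W}_n,
\end{align*}
where $\bm{W}_n=(W_1,\ldots,W_n)^T$. This uses that $B_n$ forms partial sums, $A_n=I_n-K_n$ forms successive differences, and $\si(0)=0$.

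Finally, I would write $\langle \bm{\xi},A_n\bm{\si}\rangle=\langle A_n^T\bm{\xi},\bm{\si}\rangle$ and condition on $\bm{W}_n=\bm{w}_n$. By independence of $\si$ and $M$, the conditional characteristic function equals $\vp_{\bm{\si}(B_n\bm{w}_n)}(A_n^T\bm{\xi})$. Integrating against the joint density $\lambda^n e^{-\lambda\sum_j w_j}$ of the i.i.d. exponentials gives the stated formula. The conditioning is legitimate because the integrand is bounded by 1 and measurable in $\bm{w}_n$, the latter following from measurability of $(s,\omega)\mapsto\si(s)$ for a right-continuous process.

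I expect the main obstacle to be the pathwise identification of the jump times, which requires care at flat pieces and jumps of $\si$, together with the a.s. replacement $\si(T_n^-)=\si(T_n)$. The latter is exactly where the hypothesis of no fixed discontinuities is needed; everything after that is a routine conditioning computation.
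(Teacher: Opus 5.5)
Your proposal is correct and follows the paper's route: jump epochs of $Y$ at $\si(T_n^-)$, the identity $\bm{J}_n=A_n\bm{\si}(B_n\bm{W}_n)$, moving $A_n$ onto $A_n^T\bm{\xi}$, and conditioning on $\bm{W}_n$; it also makes explicit the a.s.\ replacement $\si(T_n^-)=\si(T_n)$, which the paper uses without comment. One small nit: your jump-epoch identification actually gives $J_{n+1}=\si(T_{n+1}^-)-\si(T_n^-)$ pathwise, which equals $\si(T_{n+1})-\si(T_n^-)$ only almost surely, but your left-limit step already covers this.
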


\begin{proof}
    The first part follows from the definition of $Y$. Indeed, $M(\Inv(t)) = \tilde{M}(n)$ for $ \Inv(t) \in \lq T_n, T_{n+1} \r$ so that Equation \eqref{forma del n waiting time di Y(t) = M(psi(t))} can be obtained by applying $\si$ to each member and using the strictly increasing property of $\si$.
    
    Let $\bm{t}_n := (t_1, t_2, \dots, t_n)^T$ and consider the vectors
    \begin{align*}
       \bm{\si} (B_n\,\bm{t}_n) &= \l \si(t_1), \si(t_1+t_2), \ldots, \si \l\sum_{j = 1}^n t_j\r \r
       \end{align*}
       and
       \begin{align*}
       A_n \,\bm{\si} (B_n \, \bm{t}_n) &=  \l \si(t_1), \si(t_1+t_2) - \si(t_1), \ldots, \si \l\sum_{j = 1}^n t_j\r - \si \l \sum_{j = 1}^{n-1} t_j \r \r.
    \end{align*}
    Now, let $\bm{W}_n :=(W_1, W_2, \dots, W_n)^T$ be the vector of the first $n\in\N_+$ i.i.d. waiting times of the Markov process $M$.
By using Formula (\ref{forma del n waiting time di Y(t) = M(psi(t))}), we see that 

\begin{align*}
    \bm{J}_n=  A_n \bm{\si} (B_n \, \bm{W}_n) . 
\end{align*}
Using the properties of the scalar product, we obtain
\begin{align*}
     \langle\bm{\xi},  \bm{J}_n   \rangle  = \langle\bm{\xi},       A_n \bm{\si} (B_n \, \bm{W}_n)              \rangle=
     \langle A_n^T\bm{\xi}, \bm{\si}( B_n \, \bm{W}_n)  \rangle , \qquad \bm{\xi} \in \R^n.
\end{align*}

Then, using a standard conditioning argument one obtains
\begin{align*}
        \E e^{i \langle \bm{\xi}, \bm{J}_n \rangle} = \lambda^n\int_{\R^n_+} \vp_{\bm{\si}(B_n  \bm{w}_n)} \l A_n^T \bm{\xi} \r  e^{-\lambda \sum_{j = 1}^nw_j} dw_1 \ldots dw_n, \qquad \bm{\xi} \in\R^n.
    \end{align*}
    %where $\vp_{\bm{\si}(\bm{t}_n)}(\cdot)$ is the characteristic function of the vector $\bm{\si}(\bm{t}_n)$.
\end{proof}

\begin{comment}
\begin{os}
    It is worth noting that even if $\si$ is non stable we still have the following distribution for the vector of waiting times of $Y$
    \begin{align*}
        \E e^{i \langle \bm{\xi}, \bm{J}_n \rangle} = \int_{\R^n_+} \vp_{B_n \, \bm{\si}(\bm{w}_n)} \l A_n^T \bm{\xi} \r \lambda^n e^{-\lambda \sum_{j = 1}^nw_j} dw_1 \ldots dw_n, \qquad \bm{\xi} \in\R^n,
    \end{align*}
    where we have used the notation of the proof, and $\vp_{\bm{\si}(\bm{t}_n)}(\cdot)$ is the characteristic function of the vector $\bm{\si}(\bm{t}_n)$ as in Formula \eqref{Funzione caratteristica processo stabile}.
\end{os}
\end{comment}

\subsubsection{The case of $\si$ with stationary increments} \label{The case of sigma with stationary increments}

As a relevant consequence of Equation \eqref{forma del n waiting time di Y(t) = M(psi(t))}, if $\si$ has stationary increments,, i.e.~$\sigma (t+h)-\sigma (h) \overset{d}{=}\sigma (t)$ for all $(t, h)\in\R_+^2$, then all the waiting times $J_n$ have the same distribution, namely $J_1 \overset{d}{=}\sigma (W_1)$.

%A first example is given by para-Markov chains, studied in Section \ref{Section: para makov chains}; indeed, in the sense of Proposition \ref{Proposition: uguaglianza in distribuzione del processo di conteggio}, they are constructed by assuming a linear in time process $\sigma (t)=t/\mathcal{L}_\lambda$ (which has stationary but dependent increments) for a suitable positive random variable $\mathcal{L}_\lambda$, whose distribution is related to complete Bernstein functions.

Many interesting models with dependent waiting times fall in this theory. A remarkable example is given by  non-Markovian chains whose waiting times have any Schur constant distribution.
For convenience, we recall the notion of Schur constant sequences of random variables.
A sequence of non-negative random variables $\lg Z_k,\ k\in\N_+\rg$ is said to be  of Schur-constant type if there exists a survival function $S:\R_+ \longrightarrow [0,1]$ such that
    \begin{align}
        P \l Z_1 > t_1, \ldots, Z_n > t_n \r = S\l \sum_{k = 1}^n t_k\r \qquad \forall n\in\mathbb{N},\, \forall t_k \in\R_+,\ k \in \{ 1,\ldots, n\}. \label{Equation Schur constant sequence}
\end{align}
    
The joint survival probability only depends  on the sum of the waiting times a.k.a.~lifetimes, which makes the model statistically tractable and suitable for many applications (see e.g.~\cite{barlow1992finetti, caramellino1994dependence, caramellino1996wbf}). In order to embed these models in the above time-change setting, we must distinguish between two cases, depending on whether $S(\cdot)$ is completely monotone or not.

    \begin{itemize}
        \item If $S(\cdot) $ is completely monotone, then it is true that $S (t) = \int_{0}^{\infty} e^{-st} \mu (ds)$ 
        for a unique probability measure $\mu$. Hence $S(\cdot)$ defines a mixture of exponential distributions.  By re-adapting Proposition (\ref{Proposition: uguaglianza in distribuzione del processo di conteggio}), we see that this case is consistent with $\sigma (t)= t/\mathcal{L}$, which has stationary but dependent increments, and then   $\Inv(t) = t\L$,  where $\L$ has distribution $\mu$. By Corollary \ref{Corollario: uguaglianza in distribuzione per le catene}, para-Markov chains are included, in the sense of finite-dimensional distribution, in this class of time-changed Markov processes.
        %This implies that, in the sense of finite-dimensional distributions, our non-Markovian chain is equivalent to   $M(\mathcal{L}t)$.
        \begin{comment}Please note that in the case of para-Markov chains there were restrictions on $\L$. As an example, the case of $U \sim \mathrm{Unif}(0,1)$ is included in the framework of Schur constant waiting times with $S(\cdot)$ representable as a mixture of exponentials, but it cannot exist a para-Markov chain that is equal in distribution to $M(tU)$.
        \end{comment}
        \item If $S(\cdot)$ is not completely monotone, then it cannot be written as a mixture of exponential distributions (consider, e.g.,  $S(\cdot)$ to be the survival function of a Gamma random variable). In this case, the structure of $\sigma$ is more complicated than before. A sufficient, but not necessary, condition for the waiting times to be Schur constant is that $\sigma$ has Schur constant increments, i.e.
        \begin{align*}
        &P(\sigma (t_1)> x_1, \sigma (t_2)-\sigma (t_1) >x_2, \dots, \sigma (t_n)-\sigma (t_{n-1}) >x_n)
        \\ &= g\biggl (\sum _{i=1}^n x_i, t_1, t_2, \dots, t_n \biggr )
        \end{align*}
        for a suitable function $g$. This can be verified using Equation \eqref{forma del n waiting time di Y(t) = M(psi(t))} and a standard conditioning argument.
    \end{itemize}

In the same way as the waiting times of para-Markov chains are a particular case of Schur constant sequences, we observe that 
Schur-constant sequences are a particular case of exchangeable ones. See \cite{de1929funzione} (in Italian) for the very first rigorous analysis of such sequences and, for instance, \cite{diaconis1977finite,diaconis1980finite,kingman1978uses} and references therein for further developments. Recently, exchangeability has been exploited to characterize measure-valued Pólya urn sequences \cite{sariev2024characterization}. We recall that the random variables \( (Z_1, \dots, Z_n) \)  are said to be exchangeable if \begin{align*}
        P \l Z_1 > t_1, \ldots, Z_n > t_n \r = P \l Z_{\pi(1)} > t_1, \ldots, Z_{\pi(n)} > t_n \r
    \end{align*}
    for each $\pi$ in the symmetric group of order $n!$, i.e.~$(Z_1,\ldots, Z_n) \overset{d}{=} \l Z_{\pi(1)},\ldots, Z_{\pi(n)} \r$. For example, consider a joint survival function depending on the sum, or the product, or the maximum of the $t_i$s (using the sum we get the Schur-constant case).
\begin{comment}
    If the joint survival function of $(Z_1,\ldots, Z_n)$ is absolutely continuous, one has the following joint density of the first $n$ waiting times
    \begin{align*}
        g(t_1, t_2,\ldots, t_n) = (-1)^n \frac{\partial ^n}{\partial t_1 \cdots \partial t_n} S\l \sum_{j = 1}^n t_j\r, \qquad n\in\mathbb{N}.
    \end{align*}
\end{comment}
It can be seen that a sequence of random variables is exchangeable if and only if its characteristic function is symmetric in its arguments, i.e.~$\E e^{i \langle \xi, (Z_1,\ldots, Z_n) \rangle} = \E e^{i \langle \pi(\xi), (Z_1,\ldots, Z_n) \rangle}$
for each $\pi$.
    The exchangeability of the waiting times $(J_1,\ldots, J_n)$ ensures that all the waiting times have the same distribution (the opposite is not true). Hence, models with exchangeable waiting times fall in the framework of $\sigma$ having stationary increments.

    For the sake of clarity, we sum up the ideas in the following scheme in Figure \ref{mo vedemo}.

\begin{figure}[h!]
    \begin{center}
    \includegraphics[height = 6cm, width = 11cm]{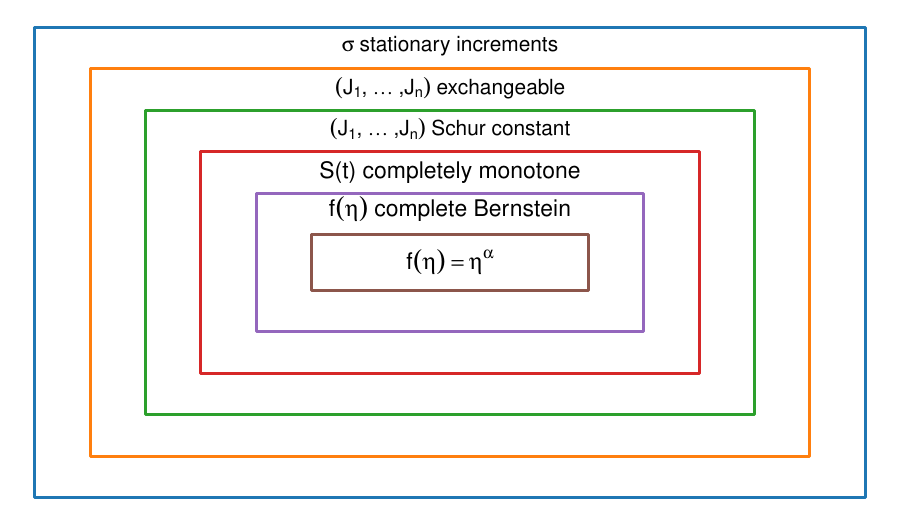}
    \caption{Comparison between different time changes: $\si$ is an increasing non-negative process and $(J_1, \ldots, J_n)$ is the vector of the first $n$ waiting times of the time changed process. The survival function of each $J_k$ is $S(\cdot)$ and $f(\cdot)$ is the Bernstein function associated to a subordinator $H$ with inverse $L$, such that $S(t) = \E e^{-\lambda t}$.}
    \label{mo vedemo}
    \end{center}
\end{figure}

\subsection{Time change by inverses of increasing stable processes} \label{Section o subsection Stable stabili}
%As said in Subsection \ref{Models  with dependent waiting times}, if the finite-dimensional distributions of $\sigma$ are known, it is possible to start from a Markovian process $M = \{M(t),\ t\in\R_+\}$ and define the subordinated process $\lg M(\Inv(t)),\ t\in\R_+ \rg$, where $\Inv = \{\Inv(t),\ t\in\R_+\}$ is the inverse process of $\sigma$.
One of the most remarkable cases of processes with  manageable finite-dimensional distributions is given by stable processes. Therefore, we can construct new chains with dependent waiting times starting from them.
We note that the case where $\sigma$ is a stable subordinator has been widely considered in the literature on semi-Markov processes. However, the case where $\sigma$ is a general increasing stable process has never been considered to the best of our knowledge. In the latter case we can also go beyond the semi-Markov setting. 
%Indeed, here we do not impose, at least at the beginning, that $\si$ has independent or stationary increments. 

A standard reference for stable processes is \cite{samorodnitsky1994stable}. Here we recall some basic facts.
A random vector $\bm{X} = \l X_1,\ldots, X_n \r$ is said to be stable if for any $a,b\in\R_+$ there exists $c\in\R_+$ and $\bm{e} \in \R^n$ such that
$ a \bm{X}^{(1)} + b\bm{X}^{(2)} \overset{d}{=}c\bm{X} + \bm{e} $
where $\bm{X}^{(i)},\ i = 1,2$ are independent copies of $\bm{X}$. It can be proved that $c = \l a^\al + b^\al \r^{\frac{1}{\al}}$ for some $\al\in(0,2]$, so that we refer to $\bm{X}$ as an $\al$-stable vector.

The distribution of a $n$-dimensional stable vector is defined by the following: the stability parameter $\al\in(0,2]$ which controls tail heaviness and the existence of moments of the marginal components (for $\al = 2$ the Gaussian case is recovered); a measure $\Gamma$ on the unit sphere $\mathcal{S}^{n-1}$ which is called the spectral measure and controls the dependence structure between components (for $n = 1$ it is related to the skewness parameter); a vector $\mu \in \R^n$ which represents the location parameter (for $\al > 1$, it represents the expectation of $\bm{X}$).

We shall consider the case of $\mu = 0\in\R^n$ to avoid drifts (for the sake of simplicity). Furthermore, we shall restrict to $\al \in (0,1)$ as explained below. In this case, the characteristic function of an $\al$-stable vector of dimension $n\in\N_+$ reads (see Theorem 6.17 in \cite{meerschaert2019stochastic} or re-arrange Theorem 2.3.1 in \cite{samorodnitsky1994stable}) 

\begin{align}
    \vp_{\bm{X}}(\bm{\xi}) :=   \exp\lg- \int_{\mathcal{S}^{n-1}} (-i\langle \bm{\xi}, \bm{s} \rangle )^{\al}  \Gamma (d\bm{s})\rg, \qquad \bm{\xi} \in \R^n. \label{Funzione caratteristica vettore stabile}
\end{align}

A real valued process $\sigma = \{\sigma(t),\ t\in\R_+\}$ is said to be $\alpha$-stable if all its finite-dimensional distributions are $\alpha$-stable; namely,  for each choice of $t_k\in\R_+,\ k \in\{ 1,\ldots, n\}$ and $n\in\N_+$, such that $t_1<t_2<...<t_n$, the vector  $ \bm{\si}(\bm{t}_n) :=  \l \sigma(t_1),\ldots, \sigma(t_n) \r^T$ is stable in $\R^n$, where $\bm{t}_n := \l t_1, \dots, t_n \r$. Here the spectral measure $\Gamma(d\bm{s}) = \Gamma _{\bm{t}_n} (d\bm{s})$  includes the dependence on the vector of times.

We are interested in almost surely increasing processes. Hence $\bm{\si}(\bm{t}_n)$ is supported on $C :=\{(x_1, \dots, x_n) \in \mathbb{R}_+^n \mid  x_k < x_{k+1},\ k  \in \{1,\ldots, n-1\} \}$.
This means that the spectral measure is compactly supported on  
$\Omega := \{ \bm{s} \in S^
{d-1} \mid  r\bm{s} \in C,\  \textrm{for} \,\, r \in \R_+ \} $. 
In this case the characteristic function \eqref{Funzione caratteristica vettore stabile} becomes 

\begin{align}
    \vp_{\bm{\si}(\bm{t}_n)}(\bm{\xi}) :=   \exp\lg- \int_{\Omega} (-i\langle \bm{\xi}, \bm{s} \rangle )^{\al}  \Gamma_{\bm{t}_n} (d\bm{s})\rg, \qquad \bm{\xi} \in \R^n. \label{Funzione caratteristica processo stabile}
\end{align}
As before, we also assume that $\sigma (0)=0$ almost surely.
We then emphasize that this choice of $\Gamma_{\bm{t}_n}(\cdot)$ ensures that $\al \in (0,1)$, so that the process $\si$ is positive almost surely.

As in the previous sections, we require $\si$ not to have deterministic times of discontinuity. 
Under these constraints, the inverse process $\Inv$ is well defined.
So, letting $M$ be a Markov process independent of $\sigma$, we shall study the time-changed process $Y = \{Y(t),\ t\in\R_+\}$ where
\begin{align*}
    Y(t) := M(\Inv(t)), \qquad t\in\R_+.
\end{align*}
The case where $M$ is a Brownian motion leads to an anomalous diffusion with long range-dependence, which goes beyond the semi-Markov anomalous diffusion described by equation (\ref{kkk}), the latter based on the inverse of a Lévy stable subordinator.
On this point, we conjecture that such anomalous diffusion should arise as scaling limit of CRTWs with dependent waiting times, lying in the basin of attraction of a stable law, but this analysis should be rather complicated and goes beyond the scope of the present paper. However, 
we here consider  the case where $M$ is a continuous-time Markov chain as in Section \ref{Models  with dependent waiting times}, with i.i.d. exponential waiting times $\{W_k,\ k\geq1\}$ of expectation $\lambda ^{-1}$. Hence the subordinated process $Y$ has dependent waiting times $\{J_k,\ k\geq1\}$ defined by Formula \eqref{forma del n waiting time di Y(t) = M(psi(t))}. 

The following result is a corollary of Theorem \ref{distribuzione intertempi}.

\begin{cor}
    Under the hypothesis of Theorem \ref{distribuzione intertempi}, if $\si$ is $\al$-stable, then the random vector $\bm{J}_n$ has distribution
    \begin{align}
      \E e^{i \langle\bm{\xi},  \bm{J}_n   \rangle} =
      \lambda^n\int_{\R^n_+} \exp \lg -\int_{\Omega} \l -i \langle A_n^T \bm{\xi}, \bm{s} \rangle \r^{\al} \,\Gamma_{B_n \, \bm{w}_n}(d\bm{s})-\lambda \sum_{j=1}^n w_j \rg dw_1\ldots dw_n. \label{funzione caratteristica intertempi nel caso stabile}
    \end{align}
\end{cor}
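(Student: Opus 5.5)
The plan is to obtain the corollary directly from Theorem \ref{distribuzione intertempi}, substituting the explicit stable characteristic function \eqref{Funzione caratteristica processo stabile} for $\vp_{\bm{\si}(B_n\bm{w}_n)}$. By that theorem, for every $\bm{\xi}\in\R^n$,
\begin{align*}
\E e^{i \langle \bm{\xi}, \bm{J}_n \rangle} = \lambda^n\int_{\R^n_+} \vp_{\bm{\si}(B_n \bm{w}_n)} \l A_n^T \bm{\xi} \r  e^{-\lambda \sum_{j = 1}^nw_j}\, dw_1 \ldots dw_n .
\end{align*}
The only work is to identify the integrand when $\si$ is $\al$-stable and increasing.

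First, fix $\bm{w}_n\in\R^n_+$. The vector of times $B_n\bm{w}_n=(w_1,w_1+w_2,\ldots,\sum_{j=1}^n w_j)$ is non-decreasing. Since $\si$ is an $\al$-stable process, $\bm{\si}(B_n\bm{w}_n)$ is an $\al$-stable vector with zero location. Because $\si$ is a.s. increasing with $\si(0)=0$, this vector is supported in the cone $C$, so its spectral measure $\Gamma_{B_n\bm{w}_n}$ lives on $\Omega$. Its characteristic function is therefore \eqref{Funzione caratteristica processo stabile} with $\bm{t}_n=B_n\bm{w}_n$.

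Second, evaluate this characteristic function at the argument $A_n^T\bm{\xi}$, which gives
\begin{align*}
\vp_{\bm{\si}(B_n \bm{w}_n)}(A_n^T\bm{\xi})=\exp\lg -\int_{\Omega} \l -i \langle A_n^T \bm{\xi}, \bm{s} \rangle \r^{\al} \,\Gamma_{B_n \bm{w}_n}(d\bm{s})\rg .
\end{align*}
Then merge the factor $e^{-\lambda\sum_j w_j}$ into the single exponential to obtain \eqref{funzione caratteristica intertempi nel caso stabile}. The integral is absolutely convergent: the characteristic function has modulus at most $1$, so the integrand is dominated by $\lambda^n e^{-\lambda\sum_j w_j}$.

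The only delicate point is the case where some $w_j=0$, so that $B_n\bm{w}_n$ has repeated entries and \eqref{Funzione caratteristica processo stabile} was stated for strictly increasing times. I would dispose of it by noting that this set has Lebesgue measure zero in $\R^n_+$ and therefore does not affect the integral. Alternatively, the stable representation still holds for a degenerate vector with coinciding coordinates, with spectral measure on the corresponding face. One should also note that measurability of $\bm{w}_n\mapsto\Gamma_{B_n\bm{w}_n}$ is implicit, since the map $\bm{w}_n\mapsto\vp_{\bm{\si}(B_n\bm{w}_n)}(A_n^T\bm{\xi})$ is already used in Theorem \ref{distribuzione intertempi}.
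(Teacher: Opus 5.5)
Your proposal is correct and matches the paper's proof: substitute the stable characteristic function \eqref{Funzione caratteristica processo stabile}, with times $B_n\bm{w}_n$ and evaluated at $A_n^T\bm{\xi}$, into the formula of Theorem \ref{distribuzione intertempi}. The paper omits your extra remarks on the null set where some $w_j=0$ and on dominating the integrand; they are harmless additional care.
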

\begin{proof}
    The proof follows from Theorem \ref{distribuzione intertempi} and the characteristic function of stable processes \eqref{Funzione caratteristica processo stabile}.
\end{proof}

\begin{os}
Consider a stable process $\sigma$ having stationary increments. This condition does not uniquely identify the finite-dimensional distributions \eqref{Funzione caratteristica processo stabile}. Then, the joint distribution of the waiting times of $Y$ is not uniquely determined. However, as seen at the beginning of \ref{The case of sigma with stationary increments}, all the waiting times of $Y$ have the same distribution as $\sigma (W_1)$. As a remarkable example, assume that 
\begin{align*}
    \mathbb{E}e^{i\xi\sigma (t)}=e^{-t(-i\xi)^\alpha}, \qquad \xi \in \R,\ t\in\R_+
\end{align*}
i.e.~the characteristic exponent is linear in the time variable. Here, the waiting times of $Y$ all have the same Mittag-Leffler distribution but are dependent. Indeed, by a conditioning argument one derives
\begin{align*}
    \E e^{i \xi \si(W_1)} = \lambda \int_0^{\infty} e^{-w(-i\xi)^{\alpha}}e^{-\lambda w} dw = \frac{\lambda}{\lambda + \l -i\xi\r^{\alpha}},
\end{align*}
which is the characteristic function of the Mittag-Leffler distribution of Example \ref{Esempio fractional PP, con distribuzione Mittag-Leffler}.
This is analogous to what happens to para-Markov chains \cite{facciaroni2025markov}, but here we do not  have a Schur constant dependence.
If $\sigma$ also has independent increments, then $\sigma$ is a stable subordinator and the waiting times of $Y$ are i.i.d.~Mittag-Leffler. 
\end{os}

\begin{comment}
\begin{os} \label{Remark on stable non negative non decreasing process}
    We observe that such a process exists. Indeed, starting from the characteristic function of a generic $\si$ $\al$-stable process \eqref{Funzione caratteristica processo stabile}, it is sufficient to impose $\al \in (0,1)$ to avoid oscillatory sample paths and to require the spectral measure to take values on the positive half-line. In the one dimensional case, it would correspond to the request $\beta = 1$ that is the distribution totally skewed to the right.
\end{os}
\end{comment}

\begin{comment}
    As an example, consider $M_{\al}$ an one-side $\al$-stable Lévy measure with $\al < 1$ and consider kernels $f_t(x) \geq 0$ such that
    \begin{align*}
        t \leq s \implies f_t(x) \leq f_s(x) \quad \text{almost everywhere.}
    \end{align*}
    Then the sought process is given by $X = \{X(t),\ t\in\R_+\}$ where
    \begin{align*}
        X(t) = \int_0^{\infty} f_t(x) dM_{\al}(x), \qquad \forall t\in\R_+.
    \end{align*}
    As a special case, if $f_t(\cdot)$ is deterministic we obtain a Lévy process.
\end{comment}

\subsubsection{Self-similar stable processes} \label{Self-similar stable processes}

Following \cite{samorodnitsky1994stable}, a central role is played by self-similar stable processes. We recall that a process $\si = \{ \si(t),\ t\in\R_+\}$ is said to be self-similar with Hurst index $\mathcal{H}$ if for each $c\in\R_+$ one has
\begin{align*}
    \l \si(ct_1),\ldots, \si(ct_n) \r \overset{d}{=} c^{\mathcal{H}}  \l \si(t_1),\ldots, \si(t_n) \r, \qquad \forall t_j \in \R_+,\ j \in \{1,\ldots, n\}.
\end{align*}
If a stable process is self-similar, then there are restrictions on the spectral measure. Consistently with Lemma 7.3.1 of \cite{samorodnitsky1994stable}, a typical form of the spectral measure is 
\begin{align}
    \Gamma _{\bm{t}_n} (d\bm{s}) = \sum _{j=1}^n t_j ^{\mathcal{H}\alpha}\, \nu _j  (d \bm{s}), \label{Misura spettrale nel caso di sigma autosimile con H generico}
\end{align}
for suitable signed measures $\nu _j$, $j \in \{1,\ldots, n\},\ n\in\N_+$.
Under this assumption, the vector $\bm{\si}(\bm{t}_n)$ has characteristic function
    \begin{align*}
        \E e^{i \langle \bm{\xi}, \bm{\si(\bm{t}_n)} \rangle} = \exp\lg -\sum_{j = 1}^n t_j^{\al \mathcal{H}} \Int_j(\bm{\xi})\rg, \qquad \bm{\xi} = (\xi_1,\ldots, \xi_n)\in \R^n,
    \end{align*}
    where $\Int_j(\bm{\xi}) := \int_{\Omega} \l -i \langle \bm{\xi}, \bm{s}\rangle  \r^{\al} \nu_j(d\bm{s}),\ j \in \{1,\ldots, n\},\ n\in\N_+$. Furthermore, we observe that for $k \in \{1,\ldots, n\}$ and $\xi_j = 0,\ j \neq k$ the previous equation must be the characteristic function of $\si(t_k)$ so that the dependence on $t_j,\ j\neq k$ must disappear on the right hand side. In other words, using $\bm{e}_j$ for the $j$th vector of the standard basis of $\R^n$, for $j \neq k$ one has
    \begin{align*}
        \int_{\Omega}
        \l -i \langle \xi_k \bm{e}_k, \bm{s} \rangle \r ^{\al}
         \nu_j(d\bm{s}) = 0
    \end{align*}
    which implies $ \Int_j(\xi_k \bm{e}_k) = 0$. As a consequence,
    \begin{align*}
        %\xi_k\bm{e}_k) = \l -i\,\xi_k\r^{\al} \delta_{jk}, \qquad \xi_k \in \R, \\
         \Int_j(\xi_k\bm{e}_k) = \delta_{jk} \int_{\Omega}\l -i\,\xi_k s_k\r^{\al} \nu_k(d\bm{s}), \qquad \xi_k \in \R, 
    \end{align*}
    must hold, where $\delta_{jk}$ indicates the Kronecker delta.

\begin{os}
\begin{comment}
    It is worth noting that a non-negative, non-decreasing stable process can be assumed to be self similar but still not Lévy. Indeed, using the notation of Remark \ref{Remark on stable non negative non decreasing process} it is sufficient to require the kernel $f_t(\cdot)$ to be an homogeneous function of order $H$ in the parameter $t$, so that
    \begin{align*}
        X(ct) = c^H\int_0^{\infty} f_t(x) dM_{\al}(x) = c^H X(t).
    \end{align*}
    \end{comment}
    
    For a construction of such a process, see Lemma 7.3.2 in \cite{samorodnitsky1994stable} with $\beta_{X(1)} = 1$ to ensure positivity and  $\mathcal{H} \neq 1/\al$ to ensure dependent increments. The drift parameter $\mu_{X(1)}$ can be chosen equal to 0 for the sake of simplicity.
\end{os}

\begin{os}
If $\sigma$ is a  $\al$-stable subordinator, then the signed measures $\nu _j$ are discrete. Indeed, considering the example $n=2$,  by using independence and stationarity of increments, the characteristic function of $\l \si(t_1), \si(t_2) \r$ has the following form
\begin{align*}
    \E e^{i \l \xi_1 \si(t_1)  + \xi_2 \si(t_2)\r} &= \E e^{i (\xi_1 + \xi_2) \si(t_1)} \E e^{i \xi_2 \si(t_2 - t_1)}  \\
    &= e^{- t_1 \l -i (\xi_1 + \xi_2) \r^{\al}} e^{- (t_2-t_1) (-i\xi_2)^{\al}} \\
     &= e^{-\int_{\Omega} \l -i(\xi_1 s_1 + \xi_2 s_2)\r^{\al} \lq t_1 \l 2^{\frac{\al}{2}} \delta_{\l 1/\sqrt{2}, 1 /\sqrt{2} \r}(ds_1,ds_2) - \delta_{\l 0, 1 \r}(ds_1,ds_2) \r  + t_2 \delta_{\l 0, 1 \r} (ds_1,ds_2)\rq}
\end{align*}
so that here
\begin{align*}
    \nu_1(ds_1,ds_2) = \l 2^{\frac{\al}{2}}\delta_{\l 1/\sqrt{2}, 1 /\sqrt{2} \r}(ds_1,ds_2) - \delta_{\l 0, 1 \r}(ds_1,ds_2) \r, \qquad \nu_2(ds_1,ds_2) = \delta_{\l 0, 1 \r}(ds_1,ds_2) .
\end{align*}
Here we used the notation $\delta_{(a,b)}$ for the Dirac delta in $ (a,b) \in \R^2$.
\end{os}

\subsubsection{Remarkable examples}
\label{remarkable cases of stable sigma}

This section is devoted to the analysis of some remarkable sub-classes of self-similar processes.

The first one concerns the case $\mathcal{H} = 1 / \al$. Here the spectral measure is linear in the time variables, so that Formula \eqref{Misura spettrale nel caso di sigma autosimile con H generico} becomes
\begin{align*}
    \Gamma _{\bm{t}_n} (d\bm{s}) = \sum _{j=1}^n t_j \, \nu _j  (d \bm{s})
\end{align*}
and a nice formula for the joint law of the waiting times follows, as shown in the  result below. The Lévy stable process is obtained by further assuming independence of increments.

The second is the case of the so-called $\mathcal{H}$-sssi processes, namely the $\mathcal{H}$-self-similar processes with stationary increments.
In this case (see  Section 7.3 of \cite{samorodnitsky1994stable}), since $\al \in (0,1)$ then the Hurst parameter must vary in $(0,1/\al]$.
%The value $H = 1/\alpha$ gives the Lévy stable process. 
We immediately use Corollary 7.3.4 in \cite{samorodnitsky1994stable} to conclude that there exists such a process which is also non negative, strictly increasing, but with dependent increments. To do this, using the notation of the book, it is sufficient to choose $\beta_{X(1)} = 1,$ and $\mathcal{H} \neq 1/\al$. Instead, for $\mathcal{H} = 1/\al$ the Lévy case is recovered. The parameter $\mu_{X(1)}$ can be chosen equal zero for the sake of simplicity.

We are ready to present a corollary of Theorem \ref{distribuzione intertempi}.

\begin{cor}
Consider the assumptions of Theorem \ref{distribuzione intertempi}. Furthermore, let $\sigma$ be an $\mathcal{H}$-self-similar $\al$-stable process. Then the following claims hold true:
\begin{enumerate}
\item If $\mathcal{H}=1 / \al$, then the waiting times have the following joint distribution
\begin{align} \label{congiunta finale}
     \mathbb{E} e^{i\langle\bm{\xi}, \bm{J}_n \rangle}= \prod _{k=1}^n \frac{\lambda}{\lambda + \sum _{j=k}^n \Int_j(A^T\bm{\xi})}, \qquad \bm{\xi} \in \R^n,
\end{align}
where $\Int_j(\bm{\xi}) = \int_{\Omega} \l -i \langle \bm{\xi}, \bm{s}\rangle  \r^{\al} \nu_j(d\bm{s})$.

\item If $\sigma$ is $\mathcal{H}$-sssi, ($\mathcal{H}\in ( 0, 1/\alpha]$), then all the waiting times have the same marginal distribution, given by
\begin{align*}
    \E e^{i \xi J_1 
    }= \lambda \int_0^{\infty} e^{- \l -i\xi \r^{\al} w^{\mathcal{H}\al} -\lambda w} dw.
\end{align*}

%\item If $H=1$ then the waiting times have a Schur constant distribution.

\end{enumerate}

\end{cor}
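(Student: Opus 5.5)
Both claims follow from Theorem \ref{distribuzione intertempi}, which expresses the characteristic function of $\bm{J}_n$ as a $\lambda^n e^{-\lambda\sum w_j}$-weighted integral of $\vp_{\bm{\si}(B_n\bm{w}_n)}(A_n^T\bm{\xi})$. The plan is to insert the self-similar form of the characteristic function of $\bm{\si}(\bm{t}_n)$, obtained from \eqref{Misura spettrale nel caso di sigma autosimile con H generico}, with $\bm{t}_n=B_n\bm{w}_n$, i.e.\ $t_j=w_1+\dots+w_j$, and then compute the resulting integral explicitly.

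For claim (1), with $\mathcal{H}=1/\al$ we have $t_j^{\al\mathcal{H}}=t_j$. The integrand in Theorem \ref{distribuzione intertempi} therefore becomes
\begin{align*}
\exp\Big\{-\sum_{j=1}^n \Big(\sum_{k=1}^j w_k\Big)\Int_j(A_n^T\bm{\xi})-\lambda\sum_{k=1}^n w_k\Big\}.
\end{align*}
We exchange the order of summation, $\sum_{j}\sum_{k\le j} w_k \Int_j=\sum_k w_k\sum_{j\ge k}\Int_j$. The exponent is then linear and separable in the $w_k$, so the $n$-fold integral factorizes into one-dimensional integrals:
\begin{align*}
\lambda\int_0^\infty e^{-w_k(\lambda+\sum_{j=k}^n\Int_j(A_n^T\bm{\xi}))}\,dw_k=\frac{\lambda}{\lambda+\sum_{j=k}^n\Int_j(A_n^T\bm{\xi})}.
\end{align*}
Taking the product over $k$ yields \eqref{congiunta finale}.

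The delicate point is justifying the exponential integral for complex arguments. We need $\mathrm{Re}\,\sum_{j\ge k}\Int_j(A_n^T\bm{\xi})\ge 0$, even though the $\nu_j$ are only signed measures. To get this, observe that the same partial sum $\sum_{j\ge k}\Int_j$ is exactly what appears when one takes $w_k>0$ and sets all other $w_i=0$. Indeed, the exponent is then $-w_k\sum_{j\ge k}\Int_j(A_n^T\bm{\xi})$, and it must equal the logarithm of a genuine characteristic function (that of $\bm{\si}$ at times $(0,\dots,0,w_k,\dots,w_k)$), whose modulus is at most $1$. Hence the real part is nonnegative. If the real part vanishes on a null set, we argue by continuity in $\bm{\xi}$ or interpret the integral as a limit. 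This positivity check is the only nonroutine step.

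For claim (2), stationary increments give $J_k\overset{d}{=}J_1$ for all $k$, as noted in Subsection \ref{The case of sigma with stationary increments}. Moreover $J_1=\si(W_1)$ because $\si(0)=0$. Conditioning on $W_1=w$, self-similarity gives $\si(w)\overset{d}{=}w^{\mathcal{H}}\si(1)$. Since $\si(1)$ is a positive $\al$-stable variable, normalized so that its characteristic function is $e^{-(-i\xi)^\al}$, this gives $\E e^{i\xi\si(w)}=e^{-(-i\xi)^\al w^{\mathcal{H}\al}}$. Integrating against $\lambda e^{-\lambda w}dw$ then yields the stated formula. The normalization of $\si(1)$, i.e.\ the choice of scale, is implicit in the statement and should be pointed out.
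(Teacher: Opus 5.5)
Your proposal is correct and follows the paper's proof. For (1) you insert the self-similar spectral form with $t_j=w_1+\dots+w_j$, swap the double sum and factorize the integral by Fubini; for (2) you use stationary increments and condition on $W_1$, deriving $\E e^{i\xi\si(w)}=e^{-(-i\xi)^{\al}w^{\mathcal{H}\al}}$ from self-similarity plus a normalization of $\si(1)$, where the paper simply cites Corollary 7.3.4 of \cite{samorodnitsky1994stable}.

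Your extra check that $\mathrm{Re}\sum_{j\ge k}\Int_j(A_n^T\bm{\xi})\ge0$ (from $|\vp_{\bm{\si}(B_n\bm{w}_n)}|\le1$, best done by letting the other $w_i\downarrow0$ rather than evaluating at tied times) is a detail the paper leaves implicit. Your fallback for a vanishing real part is unnecessary, since $\mathrm{Re}\,\bigl(\lambda+\sum_{j\ge k}\Int_j(A_n^T\bm{\xi})\bigr)\ge\lambda>0$.
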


\begin{proof}
In the case of $\mathcal{H}$-self-similar processes, one has 
\begin{align*}
    \Gamma_{B \, \bm{w}_n}(ds) = \sum_{j = 1}^n \l \sum_{k = 1}^j w_k\r^{\mathcal{H}\alpha}\nu_j(ds).
\end{align*}

For $\mathcal{H} = 1/\al$, one also has 

\begin{align*}
    \int_{\Omega} \l -i \langle A_n^T \bm{\xi}, \bm{s}\rangle \r^{\al} \Gamma_{B \, \bm{w}_n}(ds) = \sum_{j = 1}^n \sum_{k = 1}^j w_k \Int_j\l A_n^T\bm{\xi} \r = \sum_{k = 1}^n \sum_{j = k}^n w_k \Int_j\l A_n^T\bm{\xi} \r. 
\end{align*}
Here the equation \eqref{funzione caratteristica intertempi nel caso stabile} becomes 
\begin{align*}
    \E e^{i \langle \bm{\xi}, \bm{J}_n \rangle} = \lambda^n \int_{\R_+^n} \prod_{k = 1}^n e^{-w_k \l \sum_{j = k}^n \Int_j\l A_n^T \bm{\xi} \r + \lambda\r}dw_1 \ldots dw_n.
\end{align*}
An application of Fubini theorem leads to \eqref{congiunta finale}.

To prove \textit{(2)}, we  observe that, because of the property of stationary increments, we have
\begin{align*}
     J_n=\si \l \sum _{k=1}^{n} W_k  \r - \si \l \sum _{k=1}^{n-1} W_k \r \overset{d}{=} \si(W_n),
\end{align*}
so that the waiting times marginally have the same distribution of $\si(W_1)$. Specifically, using Corollary 7.3.4 in \cite{samorodnitsky1994stable} we have $ \E e^{i \xi \si(t)} = e^{- \l -i\xi \r^{\al} t^{\mathcal{H}\al}} $ and a conditioning argument gives
\begin{align*}
    \E e^{i \xi \si(W_1)} = \int_0^{\infty} e^{- \l -i\xi \r^{\al} w^{\mathcal{H}\al}} \lambda e^{-\lambda w} dw, \qquad \xi\in\R.
\end{align*}

\end{proof}
\begin{os}
If $\si$ is a Lévy stable subordinator, the waiting times of the subordinated process are i.i.d.~Mittag-Leffler, namely Formula \ref{congiunta finale} reduces to
\begin{align} 
     \mathbb{E} e^{i\langle\bm{\xi}, \bm{J}_n \rangle}= \prod _{k=1}^n \frac{\lambda}{\lambda +(-i \xi_k )^\alpha}\qquad \bm{\xi} \in \R^n
\end{align}
and thus the chain $Y$ is semi-Markovian.
\end{os}

\vspace{1cm}

\section*{\small Acknowledgements}

All the authors acknowledge financial support under the
National Recovery and Resilience Plan (NRRP), Mission 4, Component 2, Invest-
ment 1.1, Call for tender No. 104 published on 2.2.2022 by the Italian Ministry
of University and Research (MUR), funded by the European Union – NextGenerationEU– Project Title “Non–Markovian Dynamics and Non-local Equations”
– 202277N5H9 - CUP: D53D23005670006 - Grant Assignment Decree No. 973
adopted on June 30, 2023, by the Italian Ministry of Ministry of University and
Research (MUR).

Lorenzo Facciaroni acknowledges financial support from the Sapienza University of Rome, Fondo di Avvio alla Ricerca 2025 (Grant No. AR125199BF489F9E).

\section*{\small
 Conflict of interest} %%%%%%%%%%%%%%%%%%%%

 {\small
 The authors declare that they have no conflicts of interest.}

\printbibliography

\clearpage

{\bf Lorenzo Facciaroni}

Department of Statistical Sciences, Sapienza - University of Rome

Email: \texttt{lorenzo.facciaroni@uniroma1.it}

\bigskip

{\bf Costantino Ricciuti}

Department of Statistical Sciences, Sapienza - University of Rome

Email: \texttt{costantino.ricciuti@uniroma1.it} 

\bigskip

{\bf Enrico Scalas}

Department of Statistical Sciences, Sapienza - University of Rome

Email: \texttt{enrico.scalas@uniroma1.it}

\end{document}